\newtheorem{defn}{Definition}[section]
\newtheorem{prop}{Proposition}[section]
\newtheorem{example}{Example}[section]
\newtheorem{thm}{Theorem}[section]
\newtheorem{lem}{Lemma}[section]
\newtheorem{rem}{\bf Remark}[section]
\numberwithin{equation}{section}
\newtheorem{claim}{Claim}
\begin{document}

\title{ Mean dimension theory in symbolic dynamics   for  finitely generated  amenable groups 
 \footnotetext {* Corresponding author}
  \footnotetext {2010 Mathematics Subject Classification: 37B40, 37C85. }}
\author{Yunping Wang$^{1}$, Ercai Chen$^{2}$, Xiaoyao Zhou*$^{2}$\\
  \small 1 School of Science, Ningbo Unversity of Technology,\\
  \small  Ningbo 315211, Zhejiang, P.R.China\\
  \small 2 School of Mathematical Sciences and Institute of Mathematics, Nanjing Normal University,\\ 
   \small   Nanjing 210046, Jiangsu, P.R.China\\ 
      \small    e-mail:  yunpingwangj@126.com,  ecchen@njnu.edu.cn,\\ \small  zhouxiaoyaodeyouxian@126.com
}
\date{}
\maketitle

\begin{center}
 \begin{minipage}{120mm}
{\small {\bf Abstract.} In this paper,  we mainly elucidate a close  relationship   between the topological entropy  and   mean dimension theory for actions of polynomial growth groups.  We show that  metric mean dimension and mean Hausdorff  dimension of subshifts  with respect to  the lower rank subgroup are  equal to its topological entropy  multiplied by the   growth rate   of the subgroup.  Meanwhile, we also prove the above result holds for  the rate distortion dimension of subshifts with respect to  the lower rank subgroup and  measure entropy. Furthermore, 
some relevant examples are indicated.
   }
\end{minipage}
 \end{center}

\vskip0.5cm {\small{\bf Keywords and phrases: }subshift, metric mean dimension, mean Hausdorff dimension,  rate distortion dimension, polynomial growth  groups.  }\vskip0.5cm
%%%%%%%%%%%%%%%%%%%%%%%%%%%%%%%%%%%%%%%%%%%%%%%%%%%%%%%%%%%%%%%%%%%%%%%%%%%%%%%%
%%%%%%%%%%%%%%%%%%%%%%%%%%%%%%%%%%%%%%%%%%%%%%%%%%%%%%%%%%%%%%%%%%%%%%%%%%%%%%%%
%%%%%%%%%%%%%%%%%%%%%%%%%%%%%%%%%%%%%%%%%%%%%%%%%%%%%%%%%%%%%%%%%%%%%%%%%%%%%%%%

\section{Introduction}
Let $(X, G)$ be a $G$-action topological dynamical system, where $X$ is a compact Hausdorff space and $G$ a topological group.  Throughout this paper, $G$ is a finitely generated amenable groups. 
 An important dynamical quantity of a shift is its entropy, which roughly measures the exponential growth rate of its projections on  finite sets. For the case $G=\mathbb{N}$, we consider the one-sided infinite product $A^{\mathbb{N}}$ with  the shift map $\sigma: A^{\mathbb{N}} \rightarrow A^{\mathbb{N}}$ defined by
 $$\sigma((x_{n})_{n\in \mathbb{N}})=(x_{n+1})_{n\in \mathbb{N}}.$$
Define a metric which is compatible with the product topology on $A^{\mathbb{N}}$ as follows: for every $x=(x_{n})_{n\in \mathbb{N}}, y=(y_{n})_{n\in \mathbb{N}}\in A^{\mathbb{N}}$,
\begin{align*}
d(x, y)=2^{-\min \left\lbrace n| x_{n}\neq y_{n} \right\rbrace }.
\end{align*}
 Let $\mathcal{X}$ be a closed invariant subset of $A^{\mathbb{N}}$. Furstenberg  proved the following relationship among  entropy, Hausdorff  and Minkowski dimensions of $\mathcal{X}$ with respect to $d$ \cite[Proposition III.1]{Fur67}:
 \begin{align*}
 {\rm dim}_{H}(\mathcal{X}, d)= {\rm dim}_{M}(\mathcal{X}, d)= {h_{top}(\mathcal{X}, \sigma)},
 \end{align*} 
 where $h_{top}(\mathcal{X}, \sigma)$ is the topological entropy of $(\mathcal{X}, \sigma)$.
 Simpson \cite{Sim} generalized the above results  to $\mathbb{Z}^{k}$  action and more general result for amenable group action appears in $\cite{Dou 17}$. For more relevant studies one may refer to   \cite{Chen, Peres}.
 
Mean  dimension is a conjugacy invariant of topological dynamical systems which was introduced by  Gromov \cite{GRO}. This is a dynamical version of topological dimension and it counts how many parameters per iterate  we need to describe an orbit in the dynamical systems. This invariant  has  several applications which cannot be touched within  the framework of topological entropy, see \cite{Tsu18, LL18, MT19}. In particularly, it  has  many applications to embedding problem whether a dynamical system can be embedded into another or not, see for instance \cite{LWE, LT,  GLT16, GYM,  GT20}.   

It is well known  that the concepts of entropy and dimension are closely connected. So it is natural to except we can approach to mean dimension from the entropy theory viewpoint. The first attempt of such an approach was made by Lindenstrauss and Weiss \cite{LWE}. They  introduced the notion of \emph{metric mean dimension}, which is a dynamical analogue of Minkowski dimension  \cite{LWE}, and they proved that metric mean dimension is  an upper bound of the  mean dimension. It allowed them to establish the relationship between the mean dimension and the topological
entropy of dynamical systems. Namely,
  each system with finite topological entropy has zero metric mean dimension and zero mean dimension.  Lindenstrauss and  Tsukamoto in \cite{LT18} established a variational principle between the metric mean dimension and the rate distortion function   under a mild condition on the metric $d$ (called  tame growth of covering numbers, for this definition see \cite{LT18}). Inspired by the classic variational principle of entropy,   they \cite{LT19} also considered  a measure-theoretic notion of mean dimension-\emph{rate distortion dimension}, which was first introduced by Kawabata and Dembo in \cite{KD94} and proved a corresponding variational principle for mean dimension. 
  In order to link  the  measure theoretic aspect of   mean dimension theory,  they  introduced  the  \emph{mean Hausdorff dimension} in  \cite{LT19}, which is a dynamical analogue of Hausdorff dimension. 

 Recently,  Shinoda and  Tsukamoto \cite{MAS} generalized Furstenberg's  result in \cite{Fur67} to $\mathbb{Z}^{2}$  action  which involves metric mean  dimension, mean Hausdorff dimension and rate distortion dimension.  
 In this paper, by adopting the method of  \cite{MAS} and \cite{Dou 17}, we  are going to  prove  the relationship between mean dimension quantities (metric mean dimension, mean Hausdorff dimension and rate distortion dimension) and entropy,  which  generalize the result of \cite{MAS} to actions of polynomial growth groups. 
 The main difficulty in carrying out this generalization is that we need  a Vitali type covering lemma. To this aim we apply a more general covering lemma developed by Lindentrauss \cite{LIN}. 
 
 The paper is organized as follows. In section \ref{s1}, we review basic definitions of finitely generated amenable groups and  mean dimension theory. Meanwhile, we state our main results. In section \ref{s2}, we introduce covering lemma and give the proof of Theorem \ref{main}. In section \ref{s3}, we present the  notion of  rate distortion dimension and  prove the Theorem \ref{11} by following Shinoda and Tsukamoto's technical line. In section \ref{s4}, we give some examples to illustrate our main theorem.

\section{Preliminaries}\label{s1}
In this section, we review some of the standard  concepts and results  on  finitely generate amenable groups,  metric mean dimension and mean Hausdorff dimension. Finally, we state our main results.
\subsection{Finitely generated amenable groups}
Let $G$ be an infinite discrete countable group. Let $F(G)$ denote the set of all finite non-empty subset of $G$. For $K$, $F\in F(G)$, let $KF\in F(G)$, let $KF=\left\lbrace st: s\in K, t\in F\right\rbrace $ and $KF\Delta F=(KF\setminus F) \cup (F\setminus KF)$. A group $G$ is called \emph{amenable} if for each $K \in F(G)$ and $\delta>0$, there exists $F\in F(G)$ such that $|KF\Delta F|< \delta |F|$, where $|\cdot|$ is the counting measure.

Let $K\in F(G)$ and $\delta>0$. A finite subset $A\in F(G)$ is called \emph{$(K, \delta)$-invariant}  if 
$$\dfrac{|B(A, K)|}{|A|}<\delta,$$
where $B(A ,K)$, the\emph{$K$-boundary}  of $A$, is defined by 
\begin{align*}
B(A, K)=\left\lbrace g\in G: Kg \cap A\neq \emptyset ~\text{and} ~ Kg \cap (G\setminus A)\neq \emptyset\right\rbrace .
\end{align*}
Another equivalent condition for the sequence of finite subset $\left\lbrace F_{n}\right\rbrace $ of $G$ to be a F$\phi$lner sequence is that $\left\lbrace F_{n}\right\rbrace $ becomes more and more invariant, i.e., for each $K\in F(G)$ and $\delta>0$, $F_{n}$ is $(K, \delta)$-invariant when $n$ is large enough. 
A group $G$ is  amenable group if and only if $G$ admits a F$\phi$lner sequence $\left\lbrace F_{n} \right\rbrace $.  For more details and properties of the amenable group, one is referred to \cite{OW} \cite{Coo15}.
Let $\epsilon\in (0,1)$. $A_{1}, A_{2}, \cdots, A_{k} \in F(G)$ are said to be \emph{$\epsilon$-disjoint }if there exist mutually disjoint $A_{i}'\subset A_{i}$ such that $|A_{i}'|\geq(1-\epsilon)|A_{i}|$ for $1\leq i \leq k$. 
Recall that a F$\phi$lner sequence $\left\lbrace F_{n}\right\rbrace $ in $G$ is said to be \emph {tempered} if there exists a constant $C>0$ which is independent of $n$ such that 
\begin{align}
|\bigcup\limits_{k<n} F_{k}^{-1} F_{n}|\leq C|F_{n}|, ~\text{for any} ~n.
\end{align}

Let $G$ be a finitely generated amenable group  with  a symmetric generating set $S$. Recall that a generating set is called \emph{symmetric} if together with any $s\in S$ it contains $s^{-1}$. 
The $S$-\emph{word-length} $\ell_{S}(g)$ of  an element $g\in G$ is the minimal integer $n\geq 0$ such that $g$ can be expressed as a product of $n$ elements in $S$, that is, 
\begin{align*}
\ell_{S}(g)=\min \left\lbrace n\geq 0: g=s_{1}\cdots s_{n}: s_{i}\in S, 1\leq i \leq n \right\rbrace. 
\end{align*} 
It immediately follows from the definition that for $g\in G$ one has
$\ell_{S}(g)=0~ \text{if an only if} ~g= 1_{G}.$
Define the metric $d_{S}$ on $G$ :
$d_{S}(g, h)=\ell_{S}(g^{-1}h).$
It is obvious that the metric $d_{S}$ is invariant by left multiplication. 

For $g\in G$ and $n\in \mathbb{N}$, we denote by
\begin{align*}
B_{S}^{G}(g, n)=\left\lbrace h\in G: d_{S}(g, h)\leq n \right\rbrace,
\end{align*}
the ball of radius $n$ in $G$ centered at the element $g\in G$. 
When $g=1_{G}$ we have $B_{S}^{G}(1_{G}, n)=\left\lbrace h\in G: \ell_{S}(h)\leq n \right\rbrace $ and we simply write $B_{S}^{G}(n)$ instead of $B_{S}^{G}(1_{G}, n)$. Also, when there is no ambiguity on the group $G$, we omit the subscript $G$ and we simply write  $B_{S}(g, n)$ and $B_{S}(n)$ instead of $B_{S}^{G}(g, n)$ and $B_{S}^{G}(n)$.
The \emph{growth function} of $G$ relative to $S$ is a function $\gamma_{S}: \mathbb{N} \rightarrow \mathbb{N}$ defined by
$\gamma_{S}(n)=|B_{S}(n)|=|\left\lbrace g \in G: \ell_{S}(g)\leq n \right\rbrace |.$

\begin{rem}\cite{CC}
	Let $G_{1}$ and $G_{2}$ be two finitely generated  amenable groups. Then the direct product $G_{1}\times G_{2}$ is also a finitely generated amenable group.
\end{rem}
\begin{defn}
Let $G$ be a finitely generated group of polynomial growth  with a symmetric generating set $S$  if there exists constants $d, A, B>0$ such that $$An^{d}\leq \gamma_{S}(n) \leq B n^{d}$$  for all $n\in \mathbb{N} .$ We denote by ${\rm deg}(G)=d$ the degree of the polynomial growth of $G$.	
\end{defn}
 In this paper, we consider   a finitely generated group of polynomial growth. Polynomial growth groups are amenable. We will review the definitions of metric mean dimension \cite{Dou}  and introduce the mean Hausdorff dimension of amenable group actions  in subsection \ref{sub}.

%We  present some basic  definition symbolic dynamics, for more  introduction to symbolic dynamics, see \cite{Lm}.

 \subsection{Metric mean dimension and mean Hausdorff dimension}\label{sub}
 Let $G$ be a  countable discrete  amenable group. Let $(\mathcal{X}, G)$ be  a $G$-system with $d$. For $\epsilon>0$, we define $\#(\mathcal{X}, d, \epsilon)$ as the minimum natural number $n$ such that $\mathcal{X}$ can be covered by open sets $U_{1}, \cdots, U_{n}$ with ${\rm diam}(U_{i})<\epsilon$ for $1\leq i \leq n$. For $F\in F(G)$, define metric $d_{F}$ on $\mathcal{X}$ by 
 $$d_{F}(x, y)=\max\limits_{g \in F} d(gx, gy).$$ 
 For $s\geq 0$ and $\epsilon>0$, we define $H_{\epsilon}^{s}(\mathcal{X}, d)$ as 
 \begin{align}
 \inf\left\lbrace \sum\limits_{n=1}^{\infty}({\rm diam} E_{n})^{s}| \mathcal{X}=\bigcup\limits_{n} E_{n} ~\text{with}~ {\rm diam} E_{n}<\epsilon ~\text{for all}~ n\geq 1\right\rbrace .
 \end{align}
 We set $${\rm dim}_{H}(\mathcal{X}, d, \epsilon)=\sup \left\lbrace s\geq 0 | H_{\epsilon}^{s}(\mathcal{X}, d)\geq 1  \right\rbrace. $$ 
 The Hausdorff dimension ${\rm dim}_{H}(\mathcal{X}, d)$ is given by 
 $${\rm dim}_{H}(\mathcal{X},d )=\lim\limits_{\epsilon \rightarrow 0} {\rm dim}_{H}(\mathcal{X}, d, \epsilon).$$
 We define the upper and lower mean Hausdorff dimension. 
  Let $\left\lbrace F_{n}\right\rbrace $ be a F$\phi$lner sequence in $G$, we can define
 \begin{align*}
 \overline{\rm mdim}_{H}(\mathcal{X}, \left\lbrace F_{n}\right\rbrace , d)=\lim\limits_{\epsilon\rightarrow 0} \left( \limsup\limits_{n\rightarrow \infty} \dfrac{{\rm dim}_{H}(\mathcal{X}, d_{F_{n}}, \epsilon)}{|F_{n}|}\right),\\ 
 \underline{\rm mdim}_{H}(\mathcal{X}, \left\lbrace F_{n}\right\rbrace, d)=\lim\limits_{\epsilon\rightarrow 0} \left( \liminf\limits_{n\rightarrow \infty} \dfrac{{\rm dim}_{H}(\mathcal{X}, d_{F_{n}}, \epsilon)}{|F_{n}|}\right) .
 \end{align*}
 When these two quantities are equal to each other, we denote the common value by ${\rm mdim}_{H}(\mathcal{X},\left\lbrace F_{n}\right\rbrace, d)$.
 For any $\epsilon>0$, we define
 $$ S(\mathcal{X}, G, d, \epsilon)=\lim\limits_{n \rightarrow \infty} \dfrac{1}{|F_{n}|}\log \# (\mathcal{X}, d_{F_{n}}, \epsilon).$$
 The limit always exists and does not depend on the choice of the F$\phi$lner sequence $\left\lbrace F_{n} \right\rbrace $. The upper and lower metric mean dimension is then defined by 
 $$\overline{\rm mdim}_{M}(\mathcal{X}, G, d)=\limsup\limits_{\epsilon\rightarrow 0}\dfrac{S(\mathcal{X}, G, d, \epsilon)}{|\log \epsilon|},$$
 $$\underline{\rm mdim}_{M}(\mathcal{X}, G, d)=\liminf\limits_{\epsilon\rightarrow 0}\dfrac{S(\mathcal{X}, G, d, \epsilon)}{|\log \epsilon|}.$$
 When the upper and lower limits coincide, we denote the common value by ${\rm midm}_{M}(\mathcal{X}, G, d)$. 
 
 The following result is the dynamical analogue of the fact that Minkowski dimension no less than Hausdorff dimension. 
 \begin{prop}
 	Let $\left\lbrace F_{n}\right\rbrace $ be  a F$\phi$lner sequence, then
 	\begin{align*}
 	\overline{{\rm mdim}}_{H}(\mathcal{X},  \left\lbrace F_{n}\right\rbrace , d ) \leq \underline{{\rm mdim}}_{M}(\mathcal{X},  G, d ).
 	\end{align*}
 \end{prop}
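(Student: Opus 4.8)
The plan is to adapt, scale by scale, the classical fact that Hausdorff dimension never exceeds upper box (Minkowski) dimension: the group action plays no essential role, and the argument only uses that each $d_{F_n}$ is an honest metric on $\mathcal{X}$. Concretely, I would first isolate the elementary estimate that for any totally bounded metric space $(Y,\rho)$ and any $\epsilon\in(0,1)$,
\[
{\rm dim}_{H}(Y,\rho,\epsilon)\;\le\;\frac{\log\#(Y,\rho,\epsilon)}{|\log\epsilon|},
\]
and then apply it with $(Y,\rho)=(\mathcal{X},d_{F_n})$, divide by $|F_n|$, and pass to the limit first in $n$ and then in $\epsilon$.

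For the elementary estimate: put $N=\#(Y,\rho,\epsilon)$ and pick open sets $U_1,\dots,U_N$ with ${\rm diam}\,U_i<\epsilon$ covering $Y$. Then for every $s\ge 0$ the cover $Y=\bigcup_i U_i$ is admissible in the definition of $H^s_\epsilon$, so $H^s_\epsilon(Y,\rho)\le\sum_{i=1}^N({\rm diam}\,U_i)^s\le N\epsilon^s$. If $s>\log N/|\log\epsilon|$, then $N\epsilon^s=\exp(\log N-s|\log\epsilon|)<1$, hence $H^s_\epsilon(Y,\rho)<1$, so such $s$ does not belong to $\{t\ge 0: H^t_\epsilon(Y,\rho)\ge 1\}$; taking the supremum over that set gives the claimed bound.

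Now apply this with $(Y,\rho)=(\mathcal{X},d_{F_n})$ and divide by $|F_n|$:
\[
\frac{{\rm dim}_{H}(\mathcal{X},d_{F_n},\epsilon)}{|F_n|}\;\le\;\frac{1}{|\log\epsilon|}\cdot\frac{\log\#(\mathcal{X},d_{F_n},\epsilon)}{|F_n|}.
\]
Taking $\limsup_{n\to\infty}$ and using that $\tfrac{1}{|F_n|}\log\#(\mathcal{X},d_{F_n},\epsilon)$ converges to $S(\mathcal{X},G,d,\epsilon)$ (recorded in the excerpt), the right-hand $\limsup$ collapses to a limit and we obtain
\[
\limsup_{n\to\infty}\frac{{\rm dim}_{H}(\mathcal{X},d_{F_n},\epsilon)}{|F_n|}\;\le\;\frac{S(\mathcal{X},G,d,\epsilon)}{|\log\epsilon|}.
\]
Letting $\epsilon\to 0$, the left side tends by definition to $\overline{{\rm mdim}}_{H}(\mathcal{X},\{F_n\},d)$, and since the inequality holds for every $\epsilon\in(0,1)$ this limit is at most $\liminf_{\epsilon\to0}\tfrac{S(\mathcal{X},G,d,\epsilon)}{|\log\epsilon|}=\underline{{\rm mdim}}_{M}(\mathcal{X},G,d)$, which is the assertion.

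I do not expect a real obstacle: the whole content is the static comparison of Hausdorff content with covering numbers at a fixed scale. The only points needing care are bookkeeping with the definitions — in particular the $\sup\{s:H^s_\epsilon\ge 1\}$ formulation of ${\rm dim}_{H}(\cdot,\cdot,\epsilon)$ — and the order of limits, where one exploits that $S(\mathcal{X},G,d,\epsilon)$ is a genuine limit in $n$ (so the $\limsup_n$ on the right disappears) while on the left only a $\limsup_n$ is available before sending $\epsilon\to 0$.
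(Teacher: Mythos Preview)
Your proof is correct and follows essentially the same route as the paper's own argument: bound $H^s_\epsilon(\mathcal{X},d_{F_n})$ by $\#(\mathcal{X},d_{F_n},\epsilon)\cdot\epsilon^s$ using a minimal cover, deduce ${\rm dim}_H(\mathcal{X},d_{F_n},\epsilon)\le \log\#(\mathcal{X},d_{F_n},\epsilon)/|\log\epsilon|$, then divide by $|F_n|$ and pass to the limits in $n$ and $\epsilon$. If anything, your write-up is slightly more careful than the paper's in handling the order of limits (explicitly using that $\tfrac{1}{|F_n|}\log\#(\mathcal{X},d_{F_n},\epsilon)$ converges to $S(\mathcal{X},G,d,\epsilon)$ so that the $\limsup_n$ on the right collapses before taking $\liminf_{\epsilon\to0}$).
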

\begin{proof}
	Let $\left\lbrace F_{n}\right\rbrace $ be a the F$\phi$lner sequence in $G$. For $n\geq 0$, $\epsilon>0$, choose an open cover $\mathcal{X}=U_{1}\cup \cdots \cup U_{m}$ with ${\rm diam}(U_{i}, d_{F_{n}})\leq \epsilon$ and $m=\#(\mathcal{X}, d_{F_{n}}, \epsilon)$. 
	We have $$H_{\epsilon}^{s}(\mathcal{X}, d_{F_{n}})\leq m \epsilon^{s}.$$
	
	If $s> \log m/ \log (1/ \epsilon)$, then 
	$H^{s}(\mathcal{X}, d_{F_{n}})<1$. This shows $${\rm dim}_{H}(\mathcal{X}, d_{F_{n}}, \epsilon)
	\leq \dfrac{\log \#(\mathcal{X}, d_{F_{n}}, \epsilon)}{\log (1/\epsilon)}.$$
	Divide this by $|F_{n}|$ and take limits with respect to $n$ and then $\epsilon$. It follows that $\overline{\rm mdim}_{H}(\mathcal{X}, \left\lbrace F_{n}\right\rbrace, d)\leq \underline{\rm midm}_{M}(\mathcal{X}, G, d).$
\end{proof}
\subsection{Statement of the main results}
Now we state the main theorems. Let $G_{1}$ and $G_{2}$ be  finitely generated groups of polynomial growth. Then direct product  $G= G_{1} \times G_{2}$ is also finitely generated of polynomial growth. Let $S_{1}$ and $S_{2}$ be finite symmetric generating subsets of $G_{1}$ and $G_{2}$. Then the set
$$
S=(S_{1}\times \left\lbrace 1_{G_{1}}\right\rbrace ) \cup (\left\lbrace 1_{G_{2}}\right\rbrace \times S_{2})
$$
is a finite symmetric generating subset of $G$. We denote by ${\rm deg}(G_{1})$ and ${\rm deg}(G_{2})$  the degrees of the polynoimal growth of $G_{1}$ and $G_{2}$, respectively (e.g., ${\rm deg}(\mathbb{Z}^{k})=k$).
Set $S=\left\lbrace s_{1}, \cdots s_{m}\right\rbrace $. Next we defines a order in $S$ which formalize through the following construction: given $s_{i}, s_{j} \in S$ we say that   $s_{i}<s_{j}$ if $i<j$. 
Hence $s_{1}<s_{2}<\cdots<s_{m}$. 
Hence we can consider the  order in  $G$.  
For $g, g' \in G$, we call  $g< g'$ if $\ell(g)<\ell(g')$. If $\ell(g)=\ell(g')=n$, then there exist $s_{1}, \cdots, s_{n}$ and $s_{1}', \cdots, s_{n}'$ such that 
\begin{align*}
g=s_{1}\cdots s_{n},~g'=s_{1}'\cdots s_{n}'.
\end{align*} 
Take $k=\min \left\lbrace i: s_{i}\neq s_{i}' \right\rbrace $.  When $s_{k}<s_{k}'$, we denote by $g<g'$, otherwise, $g>g'$. Then we can arrange the elements in the group. 
Let $G=(g_{n})_{n=0}^{\infty}$ be an enumeration of $G$ according to the order such that  $\ell(1_{G})=\ell_{S}(g_{0})\leq \ell_{S}(g_{1}))\leq \ell_{S}(g_{2})\cdots$.

We can define  a metric $d$ on $A^{G}$ by the  following: 
\begin{align}\label{metric}
d(x, y)=2^{-\min\left\lbrace |g_{n}|_{\infty} \big| x_{g_{n}}\neq y_{g_{n}}\right\rbrace },
\end{align}
where $|g_{n}|_{\infty}=\max \left\lbrace \ell_{S_{1}}(g_{n,1}), \ell_{S_{2}}(g_{n,2}) \right\rbrace $ and $g=(g_{n,1}, g_{n, 2})$. A closed $G$-invariant subset  $\mathcal{X}$ of $A^{G}$ is called a \emph {subshift} of $A^{G}$.
\begin{thm}\label{main}
	Let $\mathcal{X}\subset A^{G}$ be a subshift. 
	Suppose that ${\rm deg}(G_{2})=1$. Then
	\begin{itemize}
		\item[(1).] $\overline{\rm mdim}_{M}(\mathcal{X}, {G_{1}}, d)\leq c_{1}\cdot h_{top}(\mathcal{X}, {G}),$
		where $c_{1}=\limsup\limits_{n\rightarrow \infty}\frac{|B_{S_{2}}(n)|}{n}$.
		\item[(2).]$	\underline{{\rm mdim}}_{H}(\mathcal{X}, \left\lbrace B_{S_{1}}(n) \right\rbrace , d)\geq c_{2}\cdot h_{top}(\mathcal{X}, G) ,$ where $c_{2}=\liminf\limits_{n\rightarrow \infty}\frac{|B_{S_{2}}(n)|}{n}.$
	\end{itemize}
	In particular, if $c_{1}=c_{2}=c$, we have 
	\begin{align*}
	{\rm mdim}_{H}(\mathcal{X}, \left\lbrace B_{S_{1}}(n) \right\rbrace , d)={\rm mdim}_{M}(\mathcal{X}, {G_{1}}, d)= c\cdot h_{top}(\mathcal{X}, {G}).
	\end{align*}
\end{thm}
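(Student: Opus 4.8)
The plan is to prove the two inequalities separately and then observe that under $c_1 = c_2 = c$ they pinch the three quantities together, using the general inequality $\overline{\mathrm{mdim}}_H \le \underline{\mathrm{mdim}}_M$ already established in the Proposition above.

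For part (1), I would estimate the covering number $\#(\mathcal{X}, d_{F_n}, \epsilon)$ where $F_n = B_{S_1}(n)$ is a Følner sequence in $G_1$ (acting on $\mathcal{X} \subset A^G$). Fix $\epsilon = 2^{-k}$. Unwinding the metric \eqref{metric}, two points are $d_{F_n}$-close iff they agree on all coordinates $g h$ with $h \in F_n$ and $|g|_\infty \le k$; since $|g|_\infty \le k$ forces $\ell_{S_2}(g_2) \le k$, the relevant coordinate set is roughly $F_n \cdot (B_{S_1}(k) \times B_{S_2}(k))$, whose cardinality is comparable to $|F_n| \cdot |B_{S_1}(k)| \cdot |B_{S_2}(k)|$. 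So $\#(\mathcal{X}, d_{F_n}, \epsilon)$ is controlled by the number of $\mathcal{X}$-admissible patterns on a Følner set for $G$ of size about $|F_n|\,|B_{S_2}(k)|$ (the $B_{S_1}(k)$ factor being absorbed since $F_n B_{S_1}(k)$ is still Følner in $G_1$ with $|F_n B_{S_1}(k)|/|F_n| \to 1$). Dividing by $|F_n|$ and letting $n \to \infty$ yields $S(\mathcal{X}, G_1, d, \epsilon) \lesssim |B_{S_2}(k)| \cdot h_{top}(\mathcal{X}, G)$; dividing by $|\log \epsilon| = k \log 2$ and taking $\limsup_{k \to \infty}$ produces the factor $c_1 = \limsup_n |B_{S_2}(n)|/n$. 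Care is needed to choose Følner sequences for $G$ of the form (Følner in $G_1$) $\times\, B_{S_2}(k)$ and to check $|B_{S_1}(k)|$ really is negligible after the double limit.

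For part (2), the lower bound for $\underline{\mathrm{mdim}}_H$, I follow the Shinoda--Tsukamoto strategy adapted to amenable groups via Lindenstrauss's covering lemma (the Vitali-type lemma flagged in the introduction as the main technical input). The idea is: from the topological entropy one extracts, for each small $\delta$, a Følner set $E$ in $G$ on which $\mathcal{X}$ carries at least $e^{(h_{top}-\delta)|E|}$ admissible patterns; one wants to propagate this richness to a lower bound on $H^s_\epsilon(\mathcal{X}, d_{B_{S_1}(n)})$ for $s$ slightly below $c_2 \cdot h_{top}$. Concretely, one builds a "resolution" at scale $\epsilon = 2^{-k}$ by tiling $B_{S_1}(n)$ (times the $G_2$-direction) by $\epsilon$-disjoint translates of $E$ using Lindenstrauss's lemma, thereby embedding a product of many independent symbol-choices into $\mathcal{X}$ restricted to the relevant window; this product structure forces any $\epsilon$-cover to be large, giving $\dim_H(\mathcal{X}, d_{B_{S_1}(n)}, \epsilon) \ge (s - o(1)) \cdot (\text{number of tiles})$, and the tile count is $\asymp |B_{S_1}(n)| \cdot |B_{S_2}(k)| / |E|$ matched against the pattern count $e^{(h_{top}-\delta)|E|}$. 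After dividing by $|B_{S_1}(n)|$, taking $\liminf_n$, then $\epsilon \to 0$, the $|B_{S_2}(k)|/k$ factor produces $c_2$ and $\delta \to 0$ cleans up.

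The main obstacle is the second part: making the Lindenstrauss covering lemma interact cleanly with the lower Hausdorff-dimension bound. One must arrange the $\epsilon$-disjoint tiling so that (a) the tiles cover a definite fraction of the window in $G = G_1 \times G_2$, (b) the admissible patterns on distinct tiles can be chosen independently inside $\mathcal{X}$ — which requires a quasi-tiling / disjointification argument and an entropy-density statement that survives the passage from "many patterns on one Følner set" to "many patterns on a disjoint union of translates" — and (c) the $\delta$, $\epsilon$, and tile-size parameters are sent to their limits in a consistent order so the growth rate $\lim |B_{S_2}(n)|/n$ emerges exactly. The $\deg(G_2) = 1$ hypothesis is precisely what makes $|B_{S_2}(k)|$ grow linearly in $k$, so that dividing by $|\log \epsilon| \asymp k$ gives a finite nonzero constant; the analogous statement for $\deg(G_2) > 1$ would diverge, which is why the theorem is stated only for a rank-one lower subgroup. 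Once both inequalities are in hand, the final "in particular" claim is immediate: $c \cdot h_{top} \le \underline{\mathrm{mdim}}_H \le \overline{\mathrm{mdim}}_H \le \underline{\mathrm{mdim}}_M \le \overline{\mathrm{mdim}}_M \le c \cdot h_{top}$, forcing equality throughout.
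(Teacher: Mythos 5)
Your treatment of part (1) is essentially the paper's: covering balls at scale $\epsilon=2^{-M}$ are absorbed by cylinders over $B_{S_1}(M)B_{S_1}(N)\times B_{S_2}(M)$, the $B_{S_1}(M)$ factor washes out by the F\o lner property, and $|B_{S_2}(M)|/M\to c_1$ supplies the constant. The final ``in particular'' chain $c\,h_{top}\le\underline{\mathrm{mdim}}_H\le\overline{\mathrm{mdim}}_H\le\underline{\mathrm{mdim}}_M\le\overline{\mathrm{mdim}}_M\le c\,h_{top}$ is also exactly how the paper concludes.

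Part (2), however, has a genuine gap, and it is precisely at the point you flag as ``obstacle (b).'' You propose to go \emph{forward}: extract $\approx 2^{(h_{top}-\delta)|E|}$ admissible patterns on a F\o lner set $E$, quasi-tile the big window by translates of $E$ using Lindenstrauss's lemma, and then choose patterns \emph{independently} on the tiles to produce a large $\epsilon$-separated subset of $\mathcal{X}$, forcing $\dim_H$ from below. For a general subshift $\mathcal{X}$ this last step is simply false: the admissible patterns on disjoint tiles cannot be chosen independently, since gluing locally admissible patterns need not produce a point of $\mathcal{X}$. No quasi-tiling or entropy-density statement repairs this; one would need a specification-type property, which is not assumed. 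The paper avoids the issue entirely by arguing in the \emph{contrapositive direction}. It assumes $\underline{\mathrm{mdim}}_H(\mathcal{X},\{B_{S_1}(n)\},d)<s=c_2\,h_{top}$ and, for each $i\le M$, extracts a cover $\mathcal{X}=\bigcup_j E_j^i$ in the metric $d_{B_{S_1}(N_i)}^{\sigma_1}$ whose Hausdorff $s$-sum is $<1$. Because of the shape of the metric \eqref{metric}, each $E_j^i$ sits inside a cylinder $C_j^i$ over the rectangle $F_{i,j}=B_{S_1}(r_j^i-1)B_{S_1}(N_i)\times B_{S_2}(r_j^i-1)$, and the Hausdorff sum condition becomes $\sum_j 2^{-\frac{1}{c_2}(s-\epsilon)|F_{i,j}|}<1$. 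Lindenstrauss's covering lemma is then applied to these $F_{i,j}$, translated across a large window $B_S(N)$, to produce a $10\delta^{1/4}$-disjoint subfamily covering $(1-\delta-\delta^{1/4})|B_S(N)|$. Summing over the (bounded number of) combinatorial types $\underline{D}(x)$ and over the index vectors $\mathbf{j}$, and factoring the products through the inequality $\sum_j 2^{-\frac{1}{c_2}(s-\epsilon)|F_{i,j}|}<1$, gives an \emph{upper bound} $|\pi_{B_S(N)}(\mathcal{X})|<2^{\frac{1}{c_2}(s-\epsilon+s\epsilon^2-\epsilon^3/2)|B_S(N)|}$, which after dividing by $|B_S(N)|$ and letting $N\to\infty$ contradicts $h_{top}(\mathcal{X},G)=s/c_2$. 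This direction only needs the trivial implication ``$x\in C_j^i$ constrains $\pi_{F_{i,j}}(x)$,'' which is always available; it never needs to realize independent tile choices inside $\mathcal{X}$. You should restructure your Step 2 as a proof by contradiction along these lines rather than a direct construction.
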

\begin{thm}\label{11}
	If  $\mu$ is a Borel probability measure on $\mathcal{X}$ invariant under both $\sigma_{1}$ and $\sigma_{2}$ and $c_{1}=c_{2}=c$, then
	\begin{align*}
	{\rm rdim}(\mathcal{X}, \sigma_{1}, \left\lbrace B_{S_{1}}(n) \right\rbrace,  d, \mu)=c\cdot h_{\mu}(\mathcal{X}, G).
	\end{align*}
\end{thm}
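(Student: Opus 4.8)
The plan is to reduce Theorem \ref{11} to Theorem \ref{main} together with the variational principle for rate distortion dimension. First I would recall that, as in Lindenstrauss--Tsukamoto \cite{LT19} and its amenable-group analogue, the rate distortion dimension $\mathrm{rdim}(\mathcal{X},\sigma_{1},\{B_{S_{1}}(n)\},d,\mu)$ is sandwiched between the lower mean Hausdorff dimension and the upper metric mean dimension whenever the metric $d$ has tame growth of covering numbers; our metric \eqref{metric} on $A^{G}$ is built from a finite alphabet, so $\#(\mathcal{X},d,\epsilon)$ is bounded (in fact eventually constant in $\epsilon$), hence the tame growth condition holds trivially. Thus for the fixed $\mu$ one gets
\begin{align*}
\underline{\mathrm{mdim}}_{H}(\mathcal{X},\{B_{S_{1}}(n)\},d)\;\le\;\mathrm{rdim}(\mathcal{X},\sigma_{1},\{B_{S_{1}}(n)\},d,\mu)\;\le\;\overline{\mathrm{mdim}}_{M}(\mathcal{X},G_{1},d).
\end{align*}
Wait --- the measure-theoretic quantity should instead be controlled by the measure entropy, so I would actually run the argument of Theorem \ref{main} with $h_{top}$ replaced by $h_{\mu}$: the upper bound in part (1) comes from covering projections of $\mathcal{X}$ to finite windows, and the same counting done against a $(\sigma_{1},\sigma_{2})$-invariant measure $\mu$ yields $\mathrm{rdim}\le c\cdot h_{\mu}(\mathcal{X},G)$.

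For the reverse inequality $\mathrm{rdim}\ge c\cdot h_{\mu}$ I would imitate the proof of part (2) of Theorem \ref{main}, replacing the topological entropy's separated-set lower bound by the Shannon--McMillan--Breiman theorem for the amenable group $G=G_{1}\times G_{2}$: for a Følner-type sequence of boxes $B_{S_{1}}(n)\times B_{S_{2}}(m)$ one controls the $\mu$-measure of most cylinder sets by $\exp(-|B_{S_{1}}(n)\times B_{S_{2}}(m)|\,h_{\mu}(\mathcal{X},G))$, then uses this to produce, for a typical $G_{1}$-orbit window, a set of $A^{B_{S_{2}}(m)}$-valued configurations of exponential size $\exp(m\cdot c\cdot h_{\mu})$ that are pairwise $d_{B_{S_{1}}(n)}$-separated with positive probability, and feeds this into the definition of rate distortion dimension via a Fano-type / mutual-information lower bound (exactly the place where Shinoda--Tsukamoto's argument in \cite{MAS} uses the rate--distortion inequality). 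The covering/packing combinatorics here is governed by the same general covering lemma of Lindenstrauss \cite{LIN} used in section \ref{s2}, applied now inside $G_{2}$ which has $\mathrm{deg}(G_{2})=1$, so that $|B_{S_{2}}(m)|/m\to c$ by hypothesis $c_{1}=c_{2}=c$.

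The main obstacle I anticipate is the measure-theoretic lower bound: one must show that the entropy "spread out" over the long $G_{2}$-direction genuinely survives as rate distortion dimension in the $G_{1}$-direction. Concretely, after invoking Shannon--McMillan--Breiman one has good control of cylinder measures on full boxes $B_{S_{1}}(n)\times B_{S_{2}}(m)$, but the rate distortion dimension only sees the $d_{B_{S_{1}}(n)}$ metric, which by \eqref{metric} only tests coordinates $g$ with $|g|_{\infty}\le$ (something of order $\log(1/\epsilon)$) --- i.e. a $G_{2}$-window of radius $\sim\log(1/\epsilon)$. So one needs to match the scale $\epsilon$ with the $G_{2}$-radius $m\sim\log(1/\epsilon)$ and verify that the exponential growth rate $c\cdot h_{\mu}$ per unit of $G_{2}$-radius converts into the correct $1/|\log\epsilon|$ normalization; this is precisely where $\mathrm{deg}(G_{2})=1$ is essential, since only then is $|B_{S_{2}}(m)|$ linear in $m\sim\log(1/\epsilon)$ and the normalizations are compatible. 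Handling the error terms from the SMB theorem (the "bad" set of small $\mu$-measure) uniformly as $n,m\to\infty$ while keeping the mutual-information estimate tight will require the tempered Følner condition and a careful diagonal choice of $n=n(\epsilon)$, $m=m(\epsilon)$; I expect this bookkeeping, rather than any single conceptual step, to be the technical heart of the proof.
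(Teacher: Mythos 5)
Your opening move---sandwiching $\mathrm{rdim}$ between $\underline{\mathrm{mdim}}_{H}$ and $\overline{\mathrm{mdim}}_{M}$---cannot give the theorem even if the variational inequalities were available: combined with Theorem~\ref{main} it would force $\mathrm{rdim}(\mathcal{X},\sigma_1,\{B_{S_1}(n)\},d,\mu)=c\,h_{top}(\mathcal{X},G)$ for \emph{every} invariant $\mu$, which is false whenever $h_\mu<h_{top}$ (e.g.\ for a fixed point). You catch this yourself and abandon it, so this is only a false start, but it is worth being explicit about why it fails. Your description of the upper bound is essentially the paper's Step~1 in spirit: encode $X$ by its projection to the window $B_{S_1}(M)B_{S_1}(N)\times B_{S_2}(M)$ with $M\approx\log(1/\epsilon)$, bound $I(X;Y)\le H(Y)$ by the Shannon entropy of that finite projection, and normalize.

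Where you diverge substantially from the paper is the lower bound. You propose to run Shannon--McMillan--Breiman for $G=G_1\times G_2$ to control cylinder measures, then build a large family of $d_{B_{S_1}(N)}$-separated configurations, package this via a Fano-type bound, and invoke the Lindenstrauss covering lemma again inside $G_2$. The paper's Step~2 is considerably more direct and uses \emph{none} of that machinery. Given an arbitrary $Y=(Y_g)_{g\in B_{S_1}(N)}$ with distortion $<\epsilon$, one picks $M$ with $\delta 2^{-M-1}<\epsilon<\delta 2^{-M}$, projects $X$ and each $Y_g$ to the $G_2$-window $\{g\}\times B_{S_2}(M)$ (resp.\ $\{1_{G_1}\}\times B_{S_2}(M)$) to get $X_g'$ and $Y_g'$, and observes that by the form of the metric~\eqref{metric} the expected number of $g$ with $X_g'\ne Y_g'$ is at most $\delta|B_{S_1}(N)|$. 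Lemma~\ref{key} (the Fano-type inequality you anticipate, but applied directly to these projections with alphabet $B=A^{B_{S_2}(M)}$) gives $I\bigl((X_g');(Y_g')\bigr)>H\bigl((X_g')\bigr)-|B_{S_1}(N)|H(\delta)-\delta|B_{S_1}(N)|\,|B_{S_2}(M)|\log|A|$, and the data-processing inequality (Lemma~\ref{dp}) transfers this to $I(X;Y)$. After dividing by $|B_{S_1}(N)|\log(1/\epsilon)$ and letting $N\to\infty$, $\epsilon\to 0$, $\delta\to 0$, the leading term $H\bigl((X_{g'})_{g'\in B_{S_1}(N)\times B_{S_2}(M)}\bigr)/\bigl(|B_{S_1}(N)||B_{S_2}(M)|\bigr)$ converges to $h_\mu(\mathcal{X},G)$ simply by the Kolmogorov--Sinai formula for measure entropy of a subshift---no typical-set construction, no SMB, no covering lemma. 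So the ingredients you isolate (Fano, the scale matching $M\sim\log(1/\epsilon)$, and $\deg(G_2)=1$ making $|B_{S_2}(M)|/M\to c$) are the right ones, but the paper reaches $h_\mu$ by applying Fano directly to the finite projection and then recognizing the entropy formula, rather than by building an explicit high-probability separated family. Your route, if carried through carefully, should work, but it replaces a three-line entropy identity with a packing argument plus SMB error control; the resulting bookkeeping is exactly what you flag as the ``technical heart,'' and the paper's proof shows it can be avoided entirely.
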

\begin{rem}
	\begin{itemize}
		\item[(1)] It is not clear whether  the value of $c$  exists for general amenable groups. Hence we only consider the polynomial growth groups in this paper.
		\item[(2)] 
		The reason for imposing the condition  ${\rm deg}(G_{2})=1$ is that if ${\rm deg}(G_{2})>1$, we don't know whether   the value of $c$ exists.   
	\end{itemize}
\end{rem}

\section{Proof of Theorem \ref{main}} \label{s2}
For a finite alphabet $A$ and  a finitely generated group of polynomial growth  $G$,  the full $G$-shift over $A$ is the set $A^{G}$, which is viewed as a compact topological space with the discrete product topology.
Consider the shift action on  the product space $A^{G}$:
$$g' (x_{g})_{g\in G}=(x_{gg'})_{g\in G}, \text{for all}~ g'\in G ~\text{and
}~ (x_{g})_{g\in G} \in A^{G}.$$
We define the shifts $\sigma_{1}$ and $\sigma_{2}$ on $A^{G}$ by
$(\sigma_{1, g}x)_{(g_{1}, g_{2})}=x_{(g_{1}g, g_{2})}$
and
$(\sigma_{2, h}x)_{(g_{1}, g_{2})}=x_{(g_{1}, g_{2}h)}$ 
for all $g\in G_{1}$, $h\in G_{2}$, $(g_{1}, g_{2})\in  G$. Let $\mathcal{X}$ be a subshift.
For $E\subset G$, let $\pi_{E}: \mathcal{X}\rightarrow A^{E}$ denote the canonical projection map, that is, the map defined by $\pi_{E}(x):=x|_{E}$ for all $x\in \mathcal{X}$, where $x|_{E}$ denote the restriction of  $x: G \rightarrow A$  to $E \subset G$. For each finite set $F\subset G$ and  $\omega\in A^{F}$, a subset $C\subset \mathcal{X}$ is called  a \emph{cylinder} over $F$ if there exists $x\in \mathcal{X}$ such that  $C$ is equal to the set of all $x\in \mathcal{X}$ with $\pi_{F}(x)=\pi_{F}(y)$.
The proof falls naturally into two steps.

\noindent\textbf{Step 1:}  
$\overline{\rm mdim}_{M}(\mathcal{X}, G_{1}, d)\leq c_{1} \cdot h_{top}(\mathcal{X}, G).$
\begin{proof}
%By $c$ $\liminf\limits_{n\rightarrow \infty}\dfrac{|B_{S_{1}}(n)|}{n}=c$, 
	For $\epsilon>0$, choose $M>0$ such that $2^{-M}<\epsilon<2^{-M+1}$. For each a natural number $N>0$, then
\begin{align*}
\#(\mathcal{X}, d_{B_{S_{1}}(N)}^{\sigma_{{1}}}, \epsilon)\leq |\pi_{(B_{S_{1}}(M)B_{S_{1}}(N)\times B_{S_{2}}(M))}(\mathcal{X})|,
\end{align*}
where  $d_{B_{S_{1}}(N)}^{\sigma_{1}}(x, y)=\max_{g\in B_{S_{1}}(N)} d(\sigma_{1, g} x, \sigma_{1, g} y) $.
Since $M-1\leq \log (1/\epsilon)\leq M$, 
	\begin{align*}
	&\overline{\rm mdim}_{M}(\mathcal{X}, G_{1}, d)\\=&\limsup\limits_{\epsilon \rightarrow 0} \left( \lim\limits_{N\rightarrow \infty} \dfrac{\log \# (\mathcal{X}, d_{B_{S_{1}}(N)}^{\sigma_{{1}}}, \epsilon) }{|B_{S_{1}}(N)|\log (1/\epsilon)}\right)\\\leq& \lim\limits_{M\rightarrow \infty} \left(\lim\limits_{N\rightarrow \infty} \dfrac{|\pi_{(B_{S_{1}}({M})B_{S_{1}}(N)\times B_{S_{2}}({M}))}(\mathcal{X})| }{|B_{S_{1}}(N)|({M-1})} \right) \\\leq& \lim\limits_{M\rightarrow \infty} \left(\lim\limits_{N\rightarrow \infty}\dfrac{|\pi_{(B_{S_{1}}({M})B_{S_{1}}(N)\times B_{S_{2}}({M}))}(\mathcal{X})| }{|B_{S_{1}}({M})B_{S_{1}}(N)||B_{S_{2}}({M})|} \times \dfrac{|B_{S_{1}}({M})B_{S_{1}}(N)||B_{S_{2}}({M})|}{|B_{S_{1}}(N)|({M-1})} \right) 
	\end{align*}
	Since $\left\lbrace B_{S_{1}}(N) \right\rbrace $ is a F$\phi$lner sequence,
then $\lim\limits_{N\rightarrow \infty} \dfrac{|B_{S_{1}}({M})B_{S_{1}}(N)|}{|B_{S_{1}}(N)|}=1$. %and $\gamma(G_{2})\sim n$,
Note that $\limsup\limits_{M\rightarrow \infty} \dfrac{|B_{S_{2}}(M)|}{{M}}=c_{1}$. We can get the desired result. 
\end{proof}	
We next give the 	 following covering lemma which  was proved  by  Linedentrauss \cite{LIN}. This lemma is crucial in the proof of  step 2 of Theorem \ref{main}. 
\begin{lem}\label{cc}\cite{LIN}
	For any $\delta\in (0, 1/100)$, $C>0$ and finite $D\subset G$, let $M\in \mathbb{N}$ be sufficiently large (depending only on $\delta$, $C$ and $D$). Let $F_{i, j}$ be an array of a finite subsets of $G$ where $i=1, \cdots, M$ and $j=1, \cdots, \ell_{i}$, such that 
	\begin{itemize}
		\item For every $i$, $\overline{F}_{i, *}=\left\lbrace F_{i, j}\right\rbrace_{j=1}^{\ell_{i}}$ satisfies 
		\begin{align*}
		|\bigcup\limits_{k'<k}F_{i, k'}^{-1} F_{i, k}|\leq C|F_{i, k}|, \text{for}~ k=2, \cdots, \ell_{i}.
		\end{align*}
		Denote $F_{i, *}=\bigcup \overline{F}_{i, *}$.
		\item The finite set sequence $F_{i, *}$ satisfy that for every $1\leq i \leq M$ and  every $1\leq k \leq \ell_{i}$,
		\begin{align*}
		|\bigcup\limits_{i'<i}DF_{i', *}^{-1} F_{i,k}|\leq (1+\delta)|F_{i, k}|.
		\end{align*}
	\end{itemize}	
Assume that  $A_{i, j}$ is another array of finite subset of $G$ with $F_{i, j}A_{i,j}\subset F$ for some finite subset $F$ of $G$. Let $A_{i, *}=\cup_{j}A_{i, j}$ and 
\begin{align*}
\alpha=\dfrac{\min\limits_{1\le i \leq M}|DA_{i, *}|}{|F|}.
\end{align*}
Then the collection of subsets of $F$, 
\begin{align*}
\overline{F}=\left\lbrace F_{i,j} a: 1\leq i \leq M, 1 \leq j \leq \ell_{i} ~ \text{and}~ a \in A_{i, j}\right\rbrace 
\end{align*}
has a subfamily $\mathcal{F}$ that is $10 \delta^{1/4}$-disjoint such that 
\begin{align*}
|\cup \mathcal{F}|\geq (\alpha- \delta^{1/4})|F|.
\end{align*}
\end{lem}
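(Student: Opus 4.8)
Since Lemma~\ref{cc} is (a mild reformulation of) the covering lemma of Lindenstrauss \cite{LIN}, the plan is to follow his greedy maximal-selection argument, adapted to the array bookkeeping used here. One builds the subfamily $\mathcal{F}$ in $M$ rounds, one for each row: in round $i$ one extracts from the translates $\{F_{i,j}a:1\le j\le\ell_i,\ a\in A_{i,j}\}$ a subfamily $\mathcal{F}_i$ that is (essentially) $10\delta^{1/4}$-disjoint and whose members are (essentially) disjoint from $U_{i-1}:=\bigcup(\mathcal{F}_1\cup\cdots\cup\mathcal{F}_{i-1})$, and then sets $\mathcal{F}:=\mathcal{F}_1\cup\cdots\cup\mathcal{F}_M$. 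The target estimate is a dichotomy for the covered fractions $\beta_i:=|U_i|/|F|$: either $\beta_{i-1}\ge\alpha-\delta^{1/4}$ already, or $\alpha-\beta_i\le(1-\eta)(\alpha-\beta_{i-1})$ for some $\eta=\eta(C,D)>0$ independent of $i$. Iterating this $M$ times gives $\alpha-\beta_M\le(1-\eta)^M\alpha<\delta^{1/4}$, which is exactly where the hypothesis ``$M$ sufficiently large (depending only on $\delta,C,D$)'' is used; the conclusion $|\cup\mathcal{F}|\ge(\alpha-\delta^{1/4})|F|$ follows at once.

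Within a single round the two structural hypotheses play complementary roles, and I would isolate them as two sub-lemmas. The within-row hypothesis $|\bigcup_{k'<k}F_{i,k'}^{-1}F_{i,k}|\le C|F_{i,k}|$ is a ``tempered chain'' condition and is precisely what permits a Vitali/Ornstein--Weiss packing inside row $i$: scanning the translates $F_{i,k}a$ in a suitable order and keeping each one that meets the union of the already-kept translates of that row in at most a $\delta^{1/4}$-fraction of itself gives, by maximality, a subfamily that is $10\delta^{1/4}$-disjoint (after absorbing the constant $C$) and whose union $C$-dominates, via a counting estimate for the multiplicities $\#\{(k,a):x\in F_{i,k}a\}$, every translate that was still ``available'' when the round began. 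The cross-row hypothesis $|\bigcup_{i'<i}DF_{i',*}^{-1}F_{i,k}|\le(1+\delta)|F_{i,k}|$ decouples the rounds: since $U_{i-1}\subset\bigcup_{i'<i}F_{i',*}A_{i',*}$, a double count of the pairs $(x,a)$ with $x\in U_{i-1}\cap F_{i,k}a$ shows that for all but an $O(\delta)$-fraction of the positions $a\in A_{i,k}$ the translate $F_{i,k}a$ meets $U_{i-1}$ in at most a $\delta^{1/4}$-fraction of itself, so that the family selected in round $i$ is almost disjoint from everything selected before. The quantity $\alpha=\min_i|DA_{i,*}|/|F|$ enters through the reach estimate: the positions $A_{i,*}$ always ``cover'' a sub-region of $F$ of relative size at least $\alpha$, hence a sub-region of $F\setminus U_{i-1}$ of relative size at least $\alpha-\beta_{i-1}$, and running the within-row packing against this sub-region while discarding the $O(\delta)$-fraction of positions spoiled by $U_{i-1}$ produces the claimed $\eta$-fraction of fresh mass.

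The main obstacle, as usual with covering lemmas of this kind, is not any single step but the simultaneous bookkeeping: one must choose the internal thresholds — the cut-off $\delta^{1/4}$, the buffer set $D$, and $\eta(C,D)$ — so that the three independent losses (overlap inside a row, overlap with earlier rows, positions unreachable at the current scale) each stay below the tolerance and together leave the selected family $10\delta^{1/4}$-disjoint with covered fraction within $\delta^{1/4}$ of $\alpha$; and one must verify that a single $M$ depending only on $\delta,C,D$ makes $(1-\eta)^M\alpha<\delta^{1/4}$. Getting the within-row maximal selection to interact cleanly with the cross-row counting, and tracking these constants through both, is the technical heart of the argument.
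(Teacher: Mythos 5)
The paper does not prove Lemma~\ref{cc}; it quotes it from Lindenstrauss \cite{LIN}, so there is no in-paper argument to compare against, and you are reconstructing the argument blind.

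Your reconstruction gets the architecture right: $M$ rounds indexed by the rows, an exhaustion dichotomy $\alpha-\beta_i\le(1-\eta)(\alpha-\beta_{i-1})$ with $\eta=\eta(C,D)$, and the final use of ``$M$ large depending only on $\delta,C,D$'' to force $(1-\eta)^M<\delta^{1/4}$. You also correctly split the roles of the two hypotheses: the within-row condition $|\bigcup_{k'<k}F_{i,k'}^{-1}F_{i,k}|\le C|F_{i,k}|$ governs overlaps among translates inside one row, while the cross-row condition $|\bigcup_{i'<i}DF_{i',*}^{-1}F_{i,k}|\le(1+\delta)|F_{i,k}|$, via a Markov-type count, makes almost every $a\in A_{i,*}$ carry a translate $F_{i,k}a$ nearly disjoint from $U_{i-1}$. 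That is the correct frame.

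Where the sketch stops short is the one-row extraction, which you rightly call the heart of the matter. Two concrete gaps. First, the greedy scan you propose (keep $F_{i,k}a$ if it meets the union of already-kept translates of that row in at most a $\delta^{1/4}$-fraction of itself) certainly produces an almost-disjoint family, but it does not by itself yield a lower bound on the measure of the kept union: your claim that this union ``$C$-dominates every translate still available'' requires a genuine multiplicity estimate for translates of different shapes within the same row, and the tempered condition alone does not hand you that for an arbitrary greedy order. In \cite{LIN} this step is handled by a different device — a random selection in which each $F_{i,j}a$ is included independently with a probability tuned to $C$, followed by a pruning step controlled by a second-moment estimate — and it is that probabilistic device which produces the characteristic $\delta^{1/2}$- and $\delta^{1/4}$-type constants, including the $10\delta^{1/4}$ in the conclusion. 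Second, the double count you invoke for the cross-row step has to actually use the $D$-hypothesis to bound $\sum_{a\in A_{i,k}}|U_{i-1}\cap F_{i,k}a|$; the trivial count gives $|U_{i-1}|\,|F_{i,k}|$, which is useless, and the way $D$ enters (through $U_{i-1}\subset\bigcup_{i'<i}F_{i',*}A_{i',*}$ while the hypothesis controls $DF_{i',*}^{-1}F_{i,k}$) is subtler than the sketch suggests. In short: correct plan, correct reading of the hypotheses, but the core one-row covering estimate is left as an unproved black box, and the selection mechanism you propose for it is not the one used in \cite{LIN}.
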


\noindent	\textbf {Step 2: } $\underline{{\rm mdim}}_{H}(\mathcal{X}, \left\lbrace B_{S_{1}}(N) \right\rbrace, d)\geq c_{2} \cdot h_{top}(\mathcal{X}, G)$.
\begin{proof}
	Set 
$s=c_{2}\cdot h_{top}(\mathcal{X}, G),~h=h_{top}(\mathcal{X}, G).$
We suppose  $\underline{\rm mdim}_{H}(\mathcal{X}, \left\lbrace B_{S_{1}}(n)\right\rbrace , d)<s$. We would like to get a contradiction. Take $\epsilon$ such that $s\epsilon-\frac{\epsilon^{2}}{2}<1$ and 
\begin{align*}
\underline{\rm mdim}_{H}(\mathcal{X}, \left\lbrace B_{S_{1}}(n)\right\rbrace , d)<s-(h+2)\epsilon.
\end{align*}
Let $D=\left\lbrace e_{G} \right\rbrace \subset G$ and  $C>0$ be the constant in the tempered condition for the F$\phi$lner sequence $\left\lbrace B_{S}(n)\right\rbrace $. We choose  $0< \delta < \min \left\lbrace 1/100,\epsilon \right\rbrace $ small enough and  a natural number $M$   satisfying  the following  conditions:
\begin{align}\label{da}
H(\delta)+\delta\log M <\epsilon^{3}/4c_{2},~|A|^{\delta}<2^{\epsilon^{3}/4c_{2}},~1/(1-10\delta^{1/4})<1+\epsilon^{2}.
\end{align}
Here $H(\delta)=-\delta \log \delta-(1-\delta)\log (1-\delta)$. (Recall that the base of the logarithm is two.) Take $M\approx \log C/\delta^{2}$ to satisfy  the requirement of Lemma \ref{cc} corresponding to $\delta, D$ and $C$. Then we can choose a sufficiently small $\delta$ satisfying the second and third conditions.

We choose a natural number $r_{0}$ such that 
\begin{align}\label{1}
r_{0} >\dfrac{1}{\delta (1- 10\delta^{1/4})}.
\end{align}
and 
\begin{align}\label{cd}
(s-(h+2)\epsilon)r<(s-(h+1)\epsilon)(r-1),~~|B_{S_{2}}(r)|>r (c_{2}-\epsilon)
\end{align}
for every $r\geq r_{0}$.

From $\underline{\rm mdim}_{H}(\mathcal{X}, \left\lbrace B_{S_{1}}(N) \right\rbrace,d )<s-(h+2)\epsilon$, for each $i=1, 2, \cdots M$,  we can find $N_{i}>0$  satisfying 
\begin{align*}
\dfrac{1}{|B_{S_{1}}(N_{i})|} {\rm dim}_{H}(\mathcal{X}, d_{B_{S_{1}}(N_{i})}^{\sigma_{1}}, 2^{-r_{0}})<s-(h+2)\epsilon.
\end{align*}
Also, we let the sequence  $\left\lbrace N_{i}\right\rbrace $ be increasing.
This implies that there exists a covering $\mathcal{X}=\cup_{j=1}^{l_{i}} E_{j}^{i} $ satisfying 
\begin{align*}
{\rm diam}(E_{j}^{i}, d_{B_{S_{1}}(N_{i})}^{\sigma_{1}})<2^{-r_{0}}(1\leq j \leq l_{i}), \sum_{j=1}^{l_{i}} ({\rm diam}(E_{j}^{i} ,~ d_{B_{S_{1}}(N_{i})}^{\sigma_{1}}))^{(s-(h+2)\epsilon)|B_{S_{1}}(N_{i})|}<1.
\end{align*}
Set $2^{-r_{j}^{i}}={\rm diam}(E_{j}^{i}, d_{A_{N_{i}}}^{\sigma_{1}}).$ Then $r_{j}^{i}$ is a natural number with $r_{j}^{i}>r_{0}$. Choose $x_{j}^{i}\in E_{j}^{i}$,
let $C_{j}^{i}=\pi_{F_{n_{i, j}}}^{-1}\left( \pi_{F_{n_{i,j }}}  (x_{j}^{i})\right) $ be  a  cylinder over  $$F_{n_{i, j}}:=\left\lbrace B_{S_{1}}(r_{j}^{i}-1)B_{S_{1}}(N_{i}) \times B_{S_{2}}(r_{j}^{i}-1)\right\rbrace .$$  
Then $E_{j}^{i}\subset C_{j}^{i}$ and $\mathcal{X}=\cup_{j=1}^{\ell_{i}}C_{j}^{i}$.
For each $1\leq i \leq M$,  we can get $\{F_{n_{i, 1}}, \cdots, F_{n_{i, l_{i}}}\}  $. Without loss of generality, let  $r_{1}^{i}<r_{2}^{i}<\cdots<r_{l_{i}}^{i}$.
Then let $\left\lbrace F_{i,1}, \cdots, F_{i, l_{i}} \right\rbrace $ in Lemma \ref{cc} be as 
\begin{align*}
\left\lbrace F_{i, 1}, \cdots, F_{i, l_{i}}\right\rbrace =\{F_{n_{i, 1}}, \cdots, F_{n_{i, l_{i}}}\}  .
\end{align*}
Since $|B_{S_{2}}(r)|>r (c_{2}-\epsilon)$ and $ s-(h+1)\epsilon<(s-\epsilon)\frac{c_{2}-\epsilon}{c_{2}}$, we have
\begin{align*}
|F_{i, j}|=|B_{S_{1}}(r_{j}^{i}-1)B_{S_{1}}(N_{i}) \times B_{S_{2}}(r_{j}^{i}-1)|\geq |B_{S_{1}}(N_{i})||B_{S_{2}}(r_{j}^{i}-1)|,
\end{align*} 
and 
\begin{align*}
\dfrac{1}{c_{2}}(s-\epsilon)|F_{i,j}|&\geq \dfrac{1}{c_{2}}(s-\epsilon)|B_{S_{1}}((N_{i})||B_{S_{2}}(r_{j}^{i}-1)|\\& > (s-\epsilon)|B_{S_{1}}((N_{i})|\left( \frac{(r_{j}^{i}-1)(c_{2}-\epsilon)}{c_{2}}\right)  
\\&>(s-(h+1)\epsilon)|B_{S_{1}}(N_{i})|(r_{j}^{i}-1)\\&> (s-(h+2)\epsilon)|B_{S_{1}}(N_{i})|r_{j}^{i}.~~(\text{by} ~\ref{cd})
\end{align*}     
Hence 
\begin{align*}
2^{-\frac{1}{c_{2}}(s-\epsilon)|F_{i,j}|}<2^{-(s-(h+2)\epsilon)|B_{S_{1}}(N_{i})|r_{j}^{i}}={\rm diam}(E_{j}^{i}, d_{B_{S_{1}}(N_{i})}^{\sigma_{1}})^{(s-(h+2)\epsilon)|B_{S_{1}}(N_{i})|}.
\end{align*}
It follows that
\begin{align}\label{e1}
\sum\limits_{j=1}^{l_{i}}2^{-\frac{1}{c_{2}}(s-\epsilon)|F_{i,j}|}\leq  \sum\limits_{j=1}^{l_{i}} ({\rm diam}(E_{j}^{i} ,~ d_{A_{N_{i}}}^{\sigma_{1}}))^{(s-(h+2)\epsilon)|B_{S_{1}}(N_{i})|}<1.
\end{align}
For any $x\in \mathcal{X}$ and sufficiently large $N$ (independent on $x$), let 
\begin{align*}
A_{i, j}=\left\lbrace a \in B_{S}(N): F_{i, j}a \subset B_{S}(N) ~\text{and}~ \sigma^{a}x\in C_{ j}^{i} \right\rbrace.
\end{align*}
We note here that $A_{i, j}$ depends on $x$. For any $g\in B_{S}(N)\setminus B(B_{S}(N), F_{i, *} )$, we have $F_{i, *}g \subset B_{S}(N)$. Then $F_{i, j}g \subset B_{S}(N)$ for each $1\leq j \leq l_{i}$.  Since $\left\lbrace C_{j}^{i} \right\rbrace_{j=1}^{l_{i}} $ cover $\mathcal{X}$, there exists $C_{j}^{i}$ such that $\sigma^{g}x \in C_{j}^{i}$. This implies $g\in A_{i, j}$ and $B_{S}(N)\setminus B(B_{S}(N), F_{i,*})\subset A_{i, *}$. Let $N$
 be sufficiently large so that $B_{S}(N)$ is $(F_{i,*}, \delta)$-invariant for all $1\leq i \leq M $, $1\leq j \leq l_{i} $, then
\begin{align*}
\alpha=\dfrac{\min_{1\leq i \leq M}|DA_{i, *}|}{|B_{S}(N)|}>1-\delta.
\end{align*}
We note that the array $\left\lbrace F_{i, j} \right\rbrace  $  meet the first requirement in Lemma \ref{cc} because of the tempered condition of $\left\lbrace B_{S}(N)\right\rbrace $. For the second requirement, we need to choose $N_{i}$ large enough compared with $r_{l_{i-1}}^{i-1}$ for every  $2\leq i \leq M$.
 Now we can apply  Lemma \ref{cc} to $$\overline{\mathcal{F}}=\left\lbrace F_{i, j}a: 1\leq i \leq M,  1 \leq j \leq l_{i} ~\text{and}~a\in A_{i, j} \right\rbrace, $$ we can find a subcollection $\mathcal{F}$ that is $10\delta^{1/4}$-disjoint such that
 \begin{align}\label{a}
 |\cup \mathcal{F}|\geq (1-\delta -\delta^{1/4})|B_{S}(N)|.
 \end{align}    
 The element in $\mathcal{F}$ will be denoted by $F_{i, j}a$. 
Denote by $\overline{A}$ the collection of $a$'s such that $F_{i,j}a$ occurs in $\mathcal{F}$. The  cardinality of $\overline{A}$ is no more than the cardinality of the subcollection $\mathcal{F}$.
 Then $|\overline{A}|\leq |\mathcal{F}|$.
 Note that $\mathcal{F}$ is $10\delta ^{1/4}$-disjoint, we have 
 \begin{align*}
 \sum\limits_{F_{i, j}a \in  \mathcal{F}}|F_{i,j}a| \leq \dfrac{1}{1-10\delta^{1/4}}|\cup \mathcal{F}|\leq \dfrac{1}{1-10\delta^{1/4}} |B_{S}(N)|.
 \end{align*}
 Then
 \begin{align*}
 |\overline{A}|\leq |\mathcal{F}| \leq \dfrac{1}{\min{|F_{i, j}|}} \cdot \dfrac{1}{1-10\delta^{1/4}}|B_{S}(N)|\leq \delta |B_{S}(N)|.~~(\text{by} ~(\ref{1}))
 \end{align*}
Set 
\begin{align*}
D(x)=\left\lbrace (a, i, j): a\in A_{i,j}~1\leq i\leq M, ~1\leq j\leq \ell_{i} ~\text{such that}~F_{i,j}a \in \mathcal{F} \right\rbrace. 
\end{align*} 
 By the above argument, if $N>0$ is sufficiently large, we have the following conclusion: 
\begin{itemize}
	\item [(1)] For each $(a, i, j)\in D(x)$, we have $\sigma^{a} x\in  C_{j}^{i}$  and $F_{i,j}a \subset \mathcal{F}$.
	\item[(2)] If $(a, i, j)$ and $(a', i', j')$  are two different element of $D(x)$, then  $F_{i, j, }a$ and $F_{i', j'}a'$ is $10\delta^{1/4}$ disjoint. 
	\item[(3)] $|\bigcup \limits_{(a,i,j)\in D(x)} F_{i, j}a|\geq (1-\delta -\delta^{1/4})|B_{S}(N)|$.
\end{itemize}
 For each $x\in \mathcal{X}$ we define $\underline{D}(x)\subset B_{S}(N)\times [1, M]$ as the set of  $(a, i)$ such that there exists $j\in [1, \ell_{i}]$ with $(a, i, j)\in D(x)$. (Note that the sets $D(x)$ and $\underline{D}(x)$ depend on $N$). For simplicity of notation, we write $D(x)$ and $\underline{D}(x)$ instead of $D_{N}(x)$ and $\underline{D}_{N}(x)$.

\begin{claim}
	If $N$ is sufficiently large then the number of possibilities  of $\underline{D}(x)$ is bounded as follows:
	\begin{align*}
	|\left\lbrace \underline{D}(x)| x \in \mathcal{X} \right\rbrace |<2^{(\epsilon^{3}/4c_{2})|B_{S}(N)|}.
	\end{align*}
\end{claim}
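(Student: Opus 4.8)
The plan is to bound the number of possibilities for $\underline{D}(x)$ by encoding it as the combined data of a "location set" inside $B_S(N)$ together with a labelling by $\{1,\dots,M\}$, and then controlling each piece separately. Recall that $\underline{D}(x) \subset B_S(N)\times[1,M]$ consists of the pairs $(a,i)$ for which some $j$ makes $(a,i,j)\in D(x)$, i.e. $F_{i,j}a\in\mathcal{F}$; in particular the set of underlying $a$'s is contained in $\overline{A}$, the collection of translation elements occurring in $\mathcal{F}$. I would first project $\underline{D}(x)$ onto its first coordinate to get $\overline{A}(x)\subset B_S(N)$; this is a subset of $B_S(N)$ of cardinality at most $|\overline{A}|\le\delta|B_S(N)|$, by the cardinality bound established just above the Claim using (\ref{1}) and the $10\delta^{1/4}$-disjointness of $\mathcal{F}$.

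The number of subsets of $B_S(N)$ of size at most $\delta|B_S(N)|$ is at most $\sum_{k\le\delta|B_S(N)|}\binom{|B_S(N)|}{k}$, which by the standard entropy estimate for binomial sums is bounded by $2^{H(\delta)|B_S(N)|}$ (here one uses $\delta<1/100<1/2$ so that the sum is dominated by its last term up to a polynomial factor, absorbed for large $N$). Next, for a fixed location set $\overline{A}(x)$, each element $a$ in it must be assigned the label(s) $i\in[1,M]$ for which $(a,i)\in\underline{D}(x)$; crudely there are at most $M$ choices of $i$ for each such $a$ (in fact for a fixed $a$ the index $i$ is essentially determined, but $\le M$ suffices), giving at most $M^{|\overline{A}(x)|}\le M^{\delta|B_S(N)|}$ labellings. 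Multiplying, the number of possibilities of $\underline{D}(x)$ is at most
\begin{align*}
2^{H(\delta)|B_S(N)|}\cdot M^{\delta|B_S(N)|} = 2^{(H(\delta)+\delta\log M)|B_S(N)|},
\end{align*}
and by the first inequality in (\ref{da}), $H(\delta)+\delta\log M<\epsilon^3/4c_2$, so this is at most $2^{(\epsilon^3/4c_2)|B_S(N)|}$ once $N$ is large enough to absorb the polynomial correction factor. This is exactly the bound asserted in the Claim.

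The main obstacle is making the counting of $\overline{A}(x)$ clean: one must verify that the set of $a$'s appearing in $\underline{D}(x)$ really does have cardinality $\le\delta|B_S(N)|$ uniformly in $x$, which relies on the earlier chain of inequalities (the minimum of $|F_{i,j}|$ is at least $r_0(c_2-\epsilon)|B_{S_1}(N_i)|$ is large, the $10\delta^{1/4}$-disjointness controls total volume by $\frac{1}{1-10\delta^{1/4}}|B_S(N)|$, and (\ref{1}) forces the quotient below $\delta$), together with the fact that distinct pairs $(a,i,j)$ in $D(x)$ give rise to "almost disjoint" tiles so no single $a$ is overcounted. A secondary technical point is the passage from the binomial sum to $2^{H(\delta)|B_S(N)|}$: strictly one gets an extra factor polynomial in $|B_S(N)|$, which is harmless because we may shrink $\delta$ slightly (the conditions (\ref{da}) are strict inequalities) or simply note that $\mathrm{poly}(|B_S(N)|)\cdot 2^{(H(\delta)+\delta\log M)|B_S(N)|}<2^{(\epsilon^3/4c_2)|B_S(N)|}$ for all large $N$. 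Beyond these, the argument is a routine combinatorial count.
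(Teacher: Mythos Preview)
Your proposal is correct and follows essentially the same route as the paper's own proof: both arguments encode $\underline{D}(x)$ by first choosing the location set $\overline{A}(x)\subset B_S(N)$ of size at most $\delta|B_S(N)|$ (counted via the entropy bound $\sum_{k\le\delta|B_S(N)|}\binom{|B_S(N)|}{k}\le |B_S(N)|\cdot 2^{H(\delta)|B_S(N)|}$), then assigning to each location a label in $[1,M]$ (contributing $M^{\delta|B_S(N)|}$), and finally invoking the condition $H(\delta)+\delta\log M<\epsilon^3/4c_2$ from (\ref{da}) to absorb the polynomial prefactor for large $N$. Your write-up is in fact slightly more explicit than the paper's about the polynomial correction and about why the projection to $\overline{A}(x)$ has the required cardinality bound.
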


\begin{proof}
	It is well-known that 
	\begin{equation*} \tbinom{n}{k}\leq  2^{n H(k/n)}.
	\end{equation*}
	 Since $|\overline{A}|\leq \delta|B_{S}(N)|$, 
	  then 
	 the number of possibilities of $\underline{D}(x)$ is bounded by 
	 \begin{align*}
	\sum\limits_{k=1}^{\delta|B_{S}(N)|}\tbinom{|B_{S}(N)|}{k}\times M^{\delta|B_{S}(N)|}&\leq |B_{S}(N)|\cdot 2^{|B_{S}(N)| H(\delta)}\times 2^{|B_{S}(N)|\delta\log M}\\&=|B_{S}(N)|\cdot 2^{|B_{S}(N)|(H(\delta)+\delta \log M) }.
	 \end{align*}
	 We assume $H(\delta)+\delta \log M <(\epsilon^{3}/4c_{2})$ in (\ref{da}). Hence, if $N$ is sufficiently large then 
	 \begin{align*}
	 |B_{S}(N)|\cdot 2^{|B_{S}(N)|(H(\delta)+\delta \log M) }<2^{(\epsilon^{3}/4c_{2})|B_{S}(N)|}.
	 \end{align*}
\end{proof}
Take a subset $E\subset  B_{S}(N)\times [1, M]$ such that there exists $x\in \mathcal{X}$ with $\underline{D}(x)=E$. We denote by $\mathcal{X}_{E}$ the set of $x\in \mathcal{X}$ with $\underline{D}(x)=E$. Let $E=\left\lbrace (a_{1}, i_{1}), (a_{2}, i_{2}), \cdots, (a_{k}, i_{k})\right\rbrace $.
\begin{claim}
	\begin{align*}
	|\pi_{B_{S}(N)}(\mathcal{X}_{E})|\cdot 2^{-\frac{1}{c_{2}}(s-\epsilon)(1+\epsilon^{2})|B_{S}(N)|}\leq |A|^{(\delta+\delta^{1/4})|B_{S}(N)|}.
	\end{align*}
\end{claim}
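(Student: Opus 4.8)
The plan is to count $\pi_{B_{S}(N)}(\mathcal{X}_{E})$ by splitting it into two layers: a combinatorial ``type'' recording, for each pair of $E$, which cylinder scale the point uses, and the residual freedom once that type is fixed. Write $E=\{(a_{1},i_{1}),\dots,(a_{k},i_{k})\}$ and fix $x\in\mathcal{X}_{E}$ (if $\mathcal{X}_{E}=\emptyset$ there is nothing to prove). For each $m$, since $(a_{m},i_{m})\in\underline{D}(x)$ there is some $j\in[1,\ell_{i_{m}}]$ with $(a_{m},i_{m},j)\in D(x)$; choose $j_{m}=j_{m}(x)$ to be such an index with $r^{i_{m}}_{j_{m}}$ maximal, and call $\tau(x)=(j_{1},\dots,j_{k})$ the \emph{type} of $x$. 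By property~(1) of $D(x)$ one has $\sigma^{a_{m}}x\in C^{i_{m}}_{j_{m}}$, so $x$ restricted to $F_{i_{m},j_{m}}a_{m}$ equals a pattern depending only on $x^{i_{m}}_{j_{m}}$ and $a_{m}$; hence $x|_{R(\tau)}$ depends only on $\tau$, where $R(\tau)=\bigcup_{m}F_{i_{m},j_{m}}a_{m}$. Since $F_{i,j}a\subseteq F_{i,j'}a$ whenever $r^{i}_{j}\le r^{i}_{j'}$, the maximal-scale choice forces $R(\tau(x))\supseteq\bigcup_{(a,i,j)\in D(x)}F_{i,j}a$, so property~(3) gives $|B_{S}(N)\setminus R(\tau(x))|\le(\delta+\delta^{1/4})|B_{S}(N)|$.

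For a fixed type $\tau$, every $x\in\mathcal{X}_{E}$ with $\tau(x)=\tau$ has the very same restriction to $R(\tau)$ and is constrained elsewhere only by lying in $A^{B_{S}(N)\setminus R(\tau)}$; hence the number of words $\pi_{B_{S}(N)}(x)$ arising from such $x$ is at most $|A|^{|B_{S}(N)\setminus R(\tau)|}\le|A|^{(\delta+\delta^{1/4})|B_{S}(N)|}$. It remains to bound the number of types. By property~(2), for $m=1,\dots,k$ the sets $F_{i_{m},j_{m}}a_{m}$ form a $10\delta^{1/4}$-disjoint subfamily of $\mathcal{F}$, so $\sum_{m}|F_{i_{m},j_{m}}|=\sum_{m}|F_{i_{m},j_{m}}a_{m}|\le\frac{1}{1-10\delta^{1/4}}|B_{S}(N)|\le(1+\epsilon^{2})|B_{S}(N)|=:T$ by (\ref{da}). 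Putting $\beta=\tfrac{1}{c_{2}}(s-\epsilon)\ge 0$ and using $1\le 2^{\beta(T-\sum_{m}|F_{i_{m},j_{m}}|)}$ for each occurring type together with (\ref{e1}),
\begin{align*}
\#\{\tau(x):x\in\mathcal{X}_{E}\}\le\sum_{(j_{1},\dots,j_{k})}2^{\beta(T-\sum_{m}|F_{i_{m},j_{m}}|)}=2^{\beta T}\prod_{m=1}^{k}\Big(\sum_{j=1}^{\ell_{i_{m}}}2^{-\beta|F_{i_{m},j}|}\Big)\le 2^{\beta T}.
\end{align*}
Since $2^{\beta T}=2^{\frac{1}{c_{2}}(s-\epsilon)(1+\epsilon^{2})|B_{S}(N)|}$, multiplying the two estimates yields $|\pi_{B_{S}(N)}(\mathcal{X}_{E})|\le 2^{\frac{1}{c_{2}}(s-\epsilon)(1+\epsilon^{2})|B_{S}(N)|}\,|A|^{(\delta+\delta^{1/4})|B_{S}(N)|}$, which is the claim after dividing by $2^{\frac{1}{c_{2}}(s-\epsilon)(1+\epsilon^{2})|B_{S}(N)|}$.

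The step I expect to be the main obstacle is making the two-scale bookkeeping rigorous: one has to be sure that distinct pairs of $E$ give \emph{distinct} (hence honestly $10\delta^{1/4}$-disjoint, not merely repeated) members of $\mathcal{F}$, so that the additivity $\sum_{m}|F_{i_{m},j_{m}}a_{m}|\le(1+\epsilon^{2})|B_{S}(N)|$ is legitimate, and that the scale $j_{m}$ is actually recoverable from $(a_{m},i_{m})$ together with the corresponding block. Both facts come down to $|F_{i,j}|$ being strictly increasing in $j$ for fixed $i$ and to the $N_{i}$ having been chosen strictly increasing, which rule out coincidences $F_{i,j}a=F_{i',j'}a'$ with $(a,i)\ne(a',i')$ among the relevant translates of the ``boxes'' $B_{S_{1}}(N_{i}+r^{i}_{j}-1)\times B_{S_{2}}(r^{i}_{j}-1)$. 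Once this is pinned down, the two displayed estimates are routine consequences of properties~(1)--(3), (\ref{da}) and (\ref{e1}).
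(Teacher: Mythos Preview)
Your proof is correct and follows essentially the same route as the paper: decompose $\mathcal{X}_{E}$ according to the tuple ${\bf j}=(j_{1},\dots,j_{k})$, bound each piece by $|A|^{(\delta+\delta^{1/4})|B_{S}(N)|}$ via property~(3), and control the sum over tuples by combining the $10\delta^{1/4}$-disjointness estimate $\sum_{m}|F_{i_{m},j_{m}}|\le(1+\epsilon^{2})|B_{S}(N)|$ with the factorisation and (\ref{e1}). Your maximal-$j_{m}$ selection and the distinctness discussion in your final paragraph make explicit a bookkeeping point the paper leaves implicit (that $D(x)\to\underline{D}(x)$ is a bijection), but otherwise the two arguments coincide.
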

 \begin{proof}
 	For ${\bf j}=(j_{1}, \cdots, j_{k})\in [1, \ell_{i_{1}}]\times \cdots \times [1, \ell_{i_{k}}]$, we denote by $\mathcal{X}_{E, {\bf j}}\subset \mathcal{X}_{E}$ the set of $x\in \mathcal{X}_{E}$ with $D(x)=\left\lbrace (a_{1}, i_{1}, j_{1}), \cdots, (a_{k}, i_{k}, j_{k}) \right\rbrace $. We have $\sigma^{a_{m}}(x)\in C_{j_{m}}^{i_{m}}$ for $x\in \mathcal{X}_{E, {\bf j}}$. Therefore we have
 	\begin{align*}
 	|\pi_{B_{S}(N)}(\mathcal{X}_{E, {\bf j}})|\leq |A|^{(\delta+\delta^{1/4})|B_{S}(N)|}.
 	\end{align*}
 Here the inequality follows from  (\ref{a}). This follows from
 \begin{align*}
 |\pi_{B_{S}(N)}(\mathcal{X}_{E})|\cdot 2^{-\frac{1}{c_{2}}(s-\epsilon)(1+\epsilon^{2})|B_{S}(N)|}&=\sum\limits_{{\bf j}}|\pi_{B_{S}(N)}(\mathcal{X}_{E, \bf j})|\cdot2^{-\frac{1}{c_{2}}(s-\epsilon)(1+\epsilon^{2})|B_{S}(N)|}\\&\leq \sum\limits_{{\bf j}}|A|^{(\delta+\delta^{1/4})|B_{S}(N)|} 2^{-\frac{1}{c_{2}}(s-\epsilon)(1+\epsilon^{2})|B_{S}(N)|}.
 \end{align*}	
 	Take ${\bf j}=(j_{1},\cdots, j_{k})\in [1, \ell_{i_{1}}]\times \cdots \times [1, \ell_{i_{k}}]$ with $\mathcal{X}_{E, {\bf j}}\neq \emptyset$. Since
 	\begin{align*}
 	\sum \limits_{F_{i,j}a\in \mathcal{F}}|F_{i, j}a|\leq \dfrac{1}{1-10\delta^{1/4}}|B_{S}(N)|<(1+\epsilon^{2})|B_{S}(N)|~\text{by} ~(\ref{da}),
 	\end{align*}
 we have
 \begin{align*}
 2^{-\frac{1}{c_{2}}(s-\epsilon)(1+\epsilon^{2})|B_{S}(N)|}\leq \prod_{m=1}^{k}2^{-\frac{1}{c_{2}}(s-\epsilon)|F_{i_{m}, j_{m}}|}.
 \end{align*}	
 	Moreover
 	\begin{align*}
 |\pi_{B_{S}(N)}(\mathcal{X}_{E})|\cdot 2^{-\frac{1}{c_{2}}(s-\epsilon)(1+\epsilon^{2})|B_{S}(N)|}\leq \sum\limits_{{\bf j}}|A|^{(\delta+\delta^{1/4})|B_{S}(N)|}	\prod_{m=1}^{k}2^{-\frac{1}{c_{2}}(s-\epsilon)|F_{i_{m}, j_{m}}|}.	
 	\end{align*}
 	The right-hand side isn't more than
 	\begin{align*}
 	|A|^{(\delta+\delta^{1/4})|B_{S}(N)|}\left( \sum\limits_{j=1}^{l_{i_{1}}}2^{-\frac{1}{c_{2}}(s-\epsilon)|F_{i_{1},j}|}\right) \times \cdots \times \left( \sum\limits_{j=1}^{l_{i_{k}}}2^{-\frac{1}{c_{2}}(s-\epsilon)|F_{i_{k},j}|}\right). 
 	\end{align*}
 According to (\ref{e1}), we have
 \begin{align*}
 |\pi_{B_{S}(N)}(\mathcal{X}_{E})|\cdot 2^{-\frac{1}{c_{2}}(s-\epsilon)(1+\epsilon^{2})|B_{S}(N)|}\leq |A|^{(\delta+\delta^{1/4})|B_{S}(N)|}.
 \end{align*}	
 \end{proof}
We can now proceed to prove Theorem \ref{main}.
 \begin{align*}
 |\pi_{B_{S}(N)}(\mathcal{X}_{E})|\cdot 2^{-\frac{1}{c_{2}}(s-\epsilon)(1+\epsilon^{2})|B_{S}(N)|}&\leq |A|^{(\delta+\delta^{1/4})|B_{S}(N)|}\\&\leq 2^{|B_{S}(N)|\epsilon^{3}/4c_{2}}. ~\text{by}~(\ref{da})
 \end{align*}
If $N$ is sufficiently large, the number of choices $E\subset B_{S}(N)\times [1,M]$ such that $\mathcal{X}_{E}\neq \emptyset$ is not greater  than  $2^{(\epsilon^{3}/4c_{2})|B_{S}(N)|}$. Hence
 \begin{align*}
  |\pi_{B_{S}(N)}(\mathcal{X})|\cdot 2^{-\frac{1}{c_{2}}(s-\epsilon)(1+\epsilon^{2})|B_{S}(N)|}&=\sum \limits_{E~ \text{with}~\mathcal{X}_{E}\neq \emptyset}  |\pi_{B_{S}(N)}(\mathcal{X}_{E})|\cdot 2^{-\frac{1}{c_{2}}(s-\epsilon)(1+\epsilon^{2})|B_{S}(N)|}\\&< 2^{(\epsilon^{3}/4c_{2})|B_{S}(N)|}\times 2^{(\epsilon^{3}/4c_{2})|B_{S}(N)|}\\&=2^{(\epsilon^{3}/2c_{2})|B_{S}(N)|}.
 \end{align*}
 Then
 \begin{align*}
 \dfrac{\log|\pi_{B_{S}(N)}(\mathcal{X})|}{|B_{S}(N)|}<\dfrac{1}{c_{2}}(s-\epsilon+s\epsilon^{2}-\frac{1}{2}\epsilon^{3}).
 \end{align*}
 Letting $N\rightarrow \infty$, we have
 \begin{align*}
 h_{top}(\mathcal{X}, G)\leq \dfrac{1}{c_{2}}(s-\epsilon+s\epsilon^{2}-\frac{1}{2}\epsilon^{3})<\dfrac{1}{c_{2}}s=h_{top}(\mathcal{X}, G)~(\text{by} ~s\epsilon-\frac{\epsilon^{2}}{2}<1).
 \end{align*}
 This is a contradiction.

\end{proof}
\section{Proof of Theorem \ref{11}}\label{s3}
\subsection{Mutual information}
Here we prepare some basics of mutual information. Let $(\Omega, \mathbb{P})$ be a probability space. Let $\mathcal{X}$ and $\mathcal{Y}$ be measurable spaces, and let $X: \Omega \rightarrow \mathcal{X}$ and $Y: \Omega \rightarrow \mathcal{Y}$ be measurable maps. We want to define their \emph{ mutual information} $I(X;Y)$ as the measure of the amount of information $X$ and $Y$ share. For  more details and properties of mutual information, one is referred  to \cite{CT06}.

{\bf Case 1:} Suppose $\mathcal{X}$ and $\mathcal{Y}$ are finite sets.  Then we define 
\begin{align*}
I(X; Y)= H(X)+ H(Y)-H(X,Y)=H(X)- H(X| Y).
\end{align*} 
More explicitly 
$$ I(X; Y)=\sum\limits_{x\in X, y\in Y} \mathbb{P}(X=x, Y=y)\log\dfrac{\mathbb{P}(X=x, Y=y)}{\mathbb{P}(X=x)\mathbb{P}(Y=y)}.$$
Here we use the convention that $0\log (0/a)=0$ for all $a\leq 0$.

{\bf Case 2:} In general,  take measurable maps $f: \mathcal{X} \rightarrow A$ and $g: \mathcal{Y} \rightarrow B$ into finite sets $A$ and $B$. Then we can consider $I(f\circ  X; g\circ Y)$ defined  by Case 1. We define $ I(X; Y)$  as the supremum of  $I(f\circ X; g\circ Y)$ over all finite-range measurable maps $f$ and $g$ defined on $\mathcal{X}$ and $\mathcal{Y}$. This definition is compatible with Case 1 when $\mathcal{X}$ and $\mathcal{Y}$ are finite sets.

\begin{lem}[Date-Processing inequality]\label{dp}
	Let $X$ and $Y$ be random variables taking values in measurable spaces $ \mathcal{X}$ and $\mathcal{Y}$ respectively. If $f:\mathcal{Y} \rightarrow \mathcal{Z}  $ is a measurable map then $I(X; f(Y)) \leq  I(X; Y)$. 	
\end{lem}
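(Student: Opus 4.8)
The plan is to reduce the general statement to the elementary finite-alphabet case that was already recorded in Case 1, using the definition of mutual information via finite-range approximations (Case 2). First I would recall the two-step structure of the definition: $I(X;Y)$ is the supremum of $I(\varphi\circ X;\psi\circ Y)$ over all measurable maps $\varphi,\psi$ with finite range. So to bound $I(X;f(Y))$ it suffices to bound $I(\varphi\circ X;\psi\circ f(Y))$ for an arbitrary pair of finite-range measurable maps $\varphi\colon\mathcal X\to A$ and $\psi\colon\mathcal Z\to B$, and then take the supremum over $\varphi,\psi$ at the end.

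The key observation is that $\psi\circ f\colon\mathcal Y\to B$ is itself a finite-range measurable map defined on $\mathcal Y$ (composition of measurable maps is measurable, and its range is contained in the finite set $B$). Hence $I(\varphi\circ X;\psi\circ f(Y)) = I(\varphi\circ X;(\psi\circ f)\circ Y)$ is one of the quantities appearing in the supremum that defines $I(X;Y)$, so $I(\varphi\circ X;\psi\circ f(Y))\le I(X;Y)$. Taking the supremum over all finite-range $\varphi$ and $\psi$ on the left-hand side gives exactly $I(X;f(Y))\le I(X;Y)$, which is the claim. No finite-alphabet inequality beyond the definition is actually needed here; the content is entirely bookkeeping about which finite-range factor maps are admissible.

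I would present this in three short lines: (i) fix finite-range measurable $\varphi$ on $\mathcal X$ and $\psi$ on $\mathcal Z$; (ii) note $\psi\circ f$ is finite-range measurable on $\mathcal Y$, so $I(\varphi\circ X;\psi\circ f(Y))\le I(X;Y)$ by definition of $I(X;Y)$ as a supremum; (iii) take $\sup_{\varphi,\psi}$ on the left to conclude $I(X;f(Y))\le I(X;Y)$. The only point that requires a word of care — and the closest thing to an obstacle — is checking that the supremum defining $I(X;f(Y))$ (over finite-range maps on $\mathcal X$ and on $\mathcal Z$) is taken over a subfamily of the maps appearing in the supremum defining $I(X;Y)$ (over finite-range maps on $\mathcal X$ and on $\mathcal Y$); this is precisely the statement that precomposition with $f$ sends finite-range measurable maps on $\mathcal Z$ to finite-range measurable maps on $\mathcal Y$, which is immediate. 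If one instead wanted a proof from the finite case directly, one could also invoke the standard fact that for finite alphabets $I(U;\psi(V))\le I(U;V)$ (which follows from $H(\psi(V)\mid U)\le H(V\mid U)$ together with $H(\psi(V))\le H(V)$, or from nonnegativity of conditional mutual information), but the approximation argument above is cleaner and matches the paper's chosen definition.
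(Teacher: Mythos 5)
Your argument is correct, and it is the natural proof given the paper's Case~2 definition of mutual information as a supremum over finite-range factor maps. Note that the paper itself states Lemma~\ref{dp} without proof, treating it as a standard fact (see \cite{CT06}); there is therefore no in-paper argument to compare against. Your reduction is the right one: for any finite-range measurable $\varphi$ on $\mathcal{X}$ and $\psi$ on $\mathcal{Z}$, the map $\psi\circ f$ is a finite-range measurable map on $\mathcal{Y}$, so $I(\varphi\circ X;\psi\circ f(Y))=I(\varphi\circ X;(\psi\circ f)\circ Y)$ is one of the quantities appearing in the supremum defining $I(X;Y)$, and taking the supremum over $(\varphi,\psi)$ on the left yields $I(X;f(Y))\le I(X;Y)$. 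No finite-alphabet inequality is needed, which is a mild advantage over the alternative route via $H(\psi(V)\mid U)\le H(V\mid U)$ that you also sketch; both are valid, but the supremum bookkeeping matches the paper's chosen definition most directly.
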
 
\subsection{Rate distortion theory}
 We introduce   rate distortion function and dimension. Let $(\mathcal{X},G)$  be a dynamical system with a distance $d$ on $\mathcal{X}$. Take an invariant probability $\mu\in M(\mathcal{X},G)$. For  a positive number $\epsilon$ and $F\in F(G)$, we define  $R_{\mu}(\epsilon, F)$ as the infimum of 
\begin{align}\label{mu}
{I(X,Y)},
\end{align}
$X$ and $Y=(Y_{0}, \cdots, Y_{n-1})$ are random variables defined on  some probability space $(\Omega, \mathbb{P})$ such that 
\begin{itemize}
	\item $X$ takes values in $\mathcal{X}$  and its law is given by $\mu$.
	\item Each $Y_{g}$ takes values in $\mathcal{X}$ and $Y=(Y_{g})_{g \in F}$ approximates the process $(gX)_{g \in F}$ in the sense that 
	\begin{align}\label{co1}
	\mathbb{E}\left( \dfrac{1}{|F|}\sum\limits_{g \in F}d(gX, Y_{g})\right) < \epsilon.
	\end{align}
	
\end{itemize}
Here $\mathbb{E}$ is the expectation with respect to the probability measure $\mathbb{P}$. 
Note that $R_{\mu}(\epsilon, F)$ depends on the distance $d$ although it is not explicitly  written in the notation.

We define the rate distortion function $$R_{\mu}(\left\lbrace B_{S}(N) \right\rbrace,  \epsilon)=\limsup\limits_{N\rightarrow \infty } \dfrac{R_{\mu}(\epsilon, B_{S}(N))}{|B_{S}(N)|}.$$ 
The upper and lower rate distortion dimensions are defined by 
\begin{align*}
\overline{\rm rdim}(\mathcal{X}, \left\lbrace B_{S}(N)\right\rbrace , d, \mu)=\limsup\limits_{\epsilon \rightarrow 0}\dfrac{R_{\mu}(\left\lbrace B_{S}(N) \right\rbrace,  \epsilon)}{\log (1/\epsilon)},
\end{align*}
\begin{align*}
\underline{\rm rdim}(\mathcal{X}, \left\lbrace B_{S}(N)\right\rbrace , d, \mu)=\liminf\limits_{\epsilon \rightarrow 0}\dfrac{R_{\mu}(\left\lbrace B_{S}(N) \right\rbrace,  \epsilon)}{\log (1/\epsilon)}.
\end{align*}
\subsection{Proof of Theorem \ref{11}}
	The proof of Theorem \ref{11} is  divided into two steps.
	
{\textbf{Step 1:}}  $\overline{\rm rdim}(\mathcal{X}, \sigma_{1}, \left\lbrace B_{S_{1}}(N) \right\rbrace, d, \mu)\leq c\cdot h_{\mu}(\mathcal{X}, G)$.
\begin{proof}	
 Let $X$ be a random variable taking values in $\mathcal{X}$ and obeying $\mu$. Let $0<\epsilon <1$ and choose $M>0$ such that $2^{-M}< \epsilon \leq 2^{-M+1}$. Given $N>0$ and  for every point $x\in \pi_{B_{S_{1}}(M)B_{S_{1}}(N)\times B_{S_{2}}(M)}(\mathcal{X})$ we take $q(x)\in \mathcal{X}$ satisfying	$\pi_{B_{S_{1}}(M)B_{S_{1}}(N)\times B_{S_{2}}(M)}(q(x))=x$. Let $X'=q(\pi_{B_{S_{1}}(M)B_{S_{1}}(N)\times B_{S_{2}}(M)}(\mathcal{X}))$ and $Y=(\sigma_{1, g}X')_{g\in B_{S_{1}}(N)}$, we conclude that 
\begin{align*}
&\dfrac{1}{|B_{S_{1}}(N)|}\sum \limits_{g\in B_{S_{1}}(N)} d(\sigma_{1, g} X, Y_{g})=\dfrac{1}{|B_{S_{1}}(N)|}\sum \limits_{g\in B_{S_{1}}(N)} d(\sigma_{1, g} X, \sigma_{1, g} X') \leq 2^{-M}<\epsilon, 
\end{align*}
and 
\begin{align*}
I(X;Y)\leq H(Y)=H(X')=H\left( (X_{g'})_{g'\in {B_{S_{1}}(M)B_{S_{1}}(N)\times B_{S_{2}}(M)}} \right). 
\end{align*}	
This yields that 
\begin{align*}
R_{\mu}(\left\lbrace B_{S_{1}}(N) \right\rbrace, \epsilon) \leq  \dfrac{I(X;Y)}{|B_{S_{1}}(N)|}\leq \dfrac{H\left( (X_{g'})_{g'\in {B_{S_{1}}(M)B_{S_{1}}(N)\times B_{S_{2}}(M)}} \right) }{|B_{S_{1}}(N)|},
\end{align*}	
and 
\begin{align*}
\dfrac{R_{\mu}(\left\lbrace B_{S_{1}}(N) \right\rbrace, \epsilon)}{\log(1/\epsilon)}\leq \dfrac{H\left( (X_{g'})_{g'\in \pi_{B_{S_{1}}(M)B_{S_{1}}(N)\times B_{S_{2}}(M)}} \right) }{|B_{S_{1}}(M)B_{S_{1}}(N)\times B_{S_{2}}(M)|} \times \dfrac{|B_{S_{1}}(M)B_{S_{1}}(N)\times B_{S_{2}}(M)|}{|B_{S_{1}}(N)|(M-1)}.
\end{align*}	
	Letting  $N\rightarrow \infty$ and then take  $\epsilon \rightarrow 0$. Note that
$\left\lbrace B_{S_{1}}(N) \right\rbrace $ is a F$\phi$lner sequence,
we get $$\lim\limits_{N\rightarrow \infty} \dfrac{|B_{S_{1}}(M)B_{S_{1}}(N)|}{|B_{S_{1}}(N)|}=1.$$ Since 
 $\lim\limits_{M\rightarrow \infty} \dfrac{|B_{S_{2}}(M)|}{M-1}=c$, 	
then
\begin{align*}
	\overline{\rm rdim}(\mathcal{X}, \sigma_{1}, \left\lbrace B_{S_{1}}(N) \right\rbrace, d, \mu)\leq c\cdot h_{\mu}(\mathcal{X}, G).
\end{align*}	
\end{proof}

For the proof of Theorem \ref{11}, we need the following lemma. The proof of this idea is adapted from  \cite{LT18} \cite{Dou}.
\begin{lem}\label{key}
Let $N\geq 1$ and $B$ a finite set. Let $X=(X_{g})_{g\in B_{S_{1}}(N)}$  and $Y=(Y_{g})_{g\in B_{S_{1}}(N)}$ be random variables taking values in $B^{B_{S_{1}}(N)}$ (namely, each $X_{g}$ and $Y_{g}$ takes values in $B$) such that for some $0<\delta < 1/2$
\begin{align*}
\mathbb{E}(\# \left\lbrace g \in B_{S_{1}}(N): X_{g}\neq  Y_{g} \right\rbrace )< \delta |B_{S_{1}}(N)|.
\end{align*}
Then 
\begin{align*}
I(X ; Y)> H(X)- |B_{S_{1}}(N)|H(\delta)-\delta |B_{S_{1}}(N)| \log |B|.
\end{align*}
\end{lem}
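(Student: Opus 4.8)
The plan is to bound the mutual information $I(X;Y)$ from below by a direct estimate on the conditional entropy $H(X\mid Y)$, using the identity $I(X;Y)=H(X)-H(X\mid Y)$ (which is valid here since $X$ takes values in the finite set $B^{B_{S_1}(N)}$). So it suffices to show that $H(X\mid Y)< |B_{S_1}(N)|H(\delta)+\delta|B_{S_1}(N)|\log|B|$. The idea is the standard Fano-type argument: knowing $Y$, to recover $X$ one only needs to specify \emph{where} $X$ and $Y$ differ and \emph{what} the correct symbols are at those coordinates.

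First I would introduce the random "error set" $Z=Z(X,Y)=\{g\in B_{S_1}(N): X_g\neq Y_g\}$, regarded as a random subset of $B_{S_1}(N)$, equivalently a random variable with values in $\{0,1\}^{B_{S_1}(N)}$. Then I would write $H(X\mid Y)\le H(X,Z\mid Y)=H(Z\mid Y)+H(X\mid Y,Z)$, using that $Z$ is a function of $(X,Y)$ for the first equality. For the first term, $H(Z\mid Y)\le H(Z)$, and since $Z$ is determined coordinatewise by the indicators $\mathbf 1_{X_g\neq Y_g}$, subadditivity gives $H(Z)\le \sum_{g\in B_{S_1}(N)} H(\mathbf 1_{X_g\neq Y_g}) = \sum_g H(p_g)$ where $p_g=\mathbb P(X_g\neq Y_g)$. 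By concavity of $H$ (Jensen) and monotonicity of $H$ on $[0,1/2]$, together with the hypothesis $\frac1{|B_{S_1}(N)|}\sum_g p_g = \frac1{|B_{S_1}(N)|}\mathbb E(\#Z)<\delta<1/2$, this is at most $|B_{S_1}(N)|\,H(\delta)$. For the second term, conditioned on $Y=y$ and $Z=z$, the coordinates of $X$ outside $z$ are forced to equal those of $y$, so $X$ ranges over at most $|B|^{|z|}$ values; hence $H(X\mid Y=y,Z=z)\le |z|\log|B|$, and averaging gives $H(X\mid Y,Z)\le \mathbb E(|Z|)\log|B| < \delta|B_{S_1}(N)|\log|B|$. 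Combining the two bounds yields the claimed strict inequality.

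The only mildly delicate point — and the one I would be careful about — is the monotonicity/concavity step bounding $\sum_g H(p_g)$ by $|B_{S_1}(N)|H(\delta)$: one needs that $H$ is increasing on $[0,1/2]$ and that the average of the $p_g$ is $<\delta\le 1/2$, so that replacing each $p_g$ by the average and then by $\delta$ only increases the sum; since the $p_g$ need not themselves lie below $1/2$ individually, one should instead argue via $H(p_g)\le H(\min(p_g,1/2))$ is not quite what is wanted — rather, use directly that $t\mapsto H(t)$ is concave on $[0,1]$ so $\frac1{|B_{S_1}(N)|}\sum_g H(p_g)\le H\!\left(\frac1{|B_{S_1}(N)|}\sum_g p_g\right)\le H(\delta)$, the last step because the argument is $<\delta\le 1/2$ and $H$ is increasing there. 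That is the whole obstacle; everything else is bookkeeping with the chain rule and data-processing-style monotonicity of entropy. I would present the argument in the order: reduce to $H(X\mid Y)$, introduce $Z$, split by the chain rule, bound each piece, recombine.
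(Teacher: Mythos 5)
Your proposal is correct and is essentially the same argument as the paper's: both introduce the error set $Z$, use the chain rule to split $H(X\mid Y)\le H(Z\mid Y)+H(X\mid Y,Z)$, bound the first term by subadditivity plus concavity of $H$ (Jensen, then monotonicity of $H$ on $[0,1/2]$) to get $|B_{S_1}(N)|H(\delta)$, and bound the second term by $\mathbb{E}|Z|\log|B|<\delta|B_{S_1}(N)|\log|B|$. The only cosmetic difference is that the paper expands $H(X,Z\mid Y)$ in two ways and cancels $H(Z\mid X,Y)=0$, whereas you use the one-sided inequality $H(X\mid Y)\le H(X,Z\mid Y)$ directly; these are equivalent.
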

\begin{proof}
	Let $Z_{g}=1_{\left\lbrace X_{g}\neq Y_{g}\right\rbrace }$ and $Z=\left\lbrace g\in B_{S_{1}}(N) | X_{g}\neq Y_{g}\right\rbrace $. We can identity $Z$ with $(Z_{g})_{g\in B_{S_{1}}(N)}$ and hence
	\begin{align*}
	H(Z)&\leq \sum\limits_{g\in B_{S_{1}}(N)} H(Z_{g})=\sum\limits_{g\in B_{S_{1}}(N)} H(\mathbb{E} Z_{g}) \\&\leq |B_{S_{1}}(N)| H\left( \dfrac{1}{B_{S_{1}}(N)}\sum\limits_{g\in B_{S_{1}}(N)}\mathbb{E} Z_{g}\right) <|B_{S_{1}}(N)|H(\delta).
	\end{align*}
	Then  $H(Z)<|B_{S_{1}}(N)|H(\delta)$. We expand $H(X, Z|Y)$ in two ways:
	\begin{align*}
	H(X, Z|Y)=H(X|Y)+H(Z|X, Y)=H(Z|Y)+H(X|Y, Z).
	\end{align*}
	$H(Z|X, Y)=0$ because $Z$ is determined by $X$ and $Y$. Form this, we conclude that
	\begin{align*}
	H(X|Y)=H(Z|Y)+H(Z|Y,Z)<|B_{S_{1}}(N)|H(\delta)+H(X|Y,Z).
	\end{align*}
	Noticing that
	$$H(X|Y, Z)=\sum\limits_{E\subset B_{S_{1}}(N)} \mathbb{P}(Z=E)H(X|Y, Z=E).$$
	For $Y$ and the condition $Z=E$, the possibilities of $X$ is at most $|B|^{|E|}$. So
	$H(X|Y, Z=E)\leq |E|\log |B|$ and 
	\begin{align*}
	H(X|Y, Z)&\leq \sum\limits_{E\subset B_{S_{1}}(N)} |E|\mathbb{P}(Z=E) \log |B|\\&=\mathbb{E}|Z|\cdot \log |B|\\&\leq \delta |B_{S_{1}}(N)| \log |B|.
	\end{align*}
	Thus $H(X|Y)<|B_{S_{1}}(N)|H(\delta)+ \delta |B_{S_{1}}(N)| \log |B|$ and $$I(X ; Y)> H(X)- |B_{S_{1}}(N)|H(\delta)-\delta |B_{S_{1}}(N)| \log |B|.$$
	
\end{proof}

\textbf{Step 2:} $\underline{\rm rdim}(\mathcal{X}, \sigma_{1}, d, \left\lbrace B_{S_{1}}(N)\right\rbrace,  \mu)\geq c\cdot h_{\mu}(\mathcal{X}, G)$.
\begin{proof}  
	Let $X$ be a random variable taking values in $\mathcal{X}$ with ${\rm Law}(X)=\mu$. Given $0<\epsilon<\delta<1/2$, $N>0$ and  let $Y=(Y_{g})_{g\in B_{S_{1}}(N)}$ be a random variable taking values in $\mathcal{X}^{B_{S_{1}}(N)}$ and satisfying 
	\begin{align*}
	\mathbb{E}\left( \dfrac{1}{|B_{S_{1}}(N)|} \sum \limits_{g\in B_{S_{1}}(N)} d(\sigma_{1, g}X, Y_{g})\right) <\epsilon.
	\end{align*}
	We will estimate the lower bound of  $I(X;Y)$ . Choose $M\geq 0$ with $\delta 2^{-M-1} < \epsilon < \delta 2^{-M}$. For $g\in B_{S_{1}}(N)$,  set
	\begin{align*}
	&X_{g}'=\pi_{\left\lbrace g\right\rbrace \times B_{S_{2}}(M)}(X)=(X_{(g, g_{2})})_{g_{2}\in B_{S_{2}}(M)}, \\& Y_{g}'=\pi_{\left\lbrace 1_{G_{1}}\right\rbrace \times B_{S_{2}}(M)}(Y_{g})=((Y_{g})_{(1_{G_{1}}, g_{2})})_{g_{2}\in B_{S_{2}}(M)}.
	\end{align*}
	If $X_{g}'\neq Y_{g}'$ for some $g$ then $d(\sigma_{1, g}X, Y_{g})\geq 2^{-M}$. Therefore  
	$\mathbb{E}d(\sigma_{1, g} X, Y_{g})\geq 2^{-M} \mathbb{P}(X_{g}'\neq Y_{g}')$ and 
	\begin{align*}
	\mathbb{E}( \#\left\lbrace  g \in B_{S_{1}}(N): X_{g}'\neq Y_{g}'\right\rbrace )&=\sum\limits_{g\in B_{S_{1}}(N)} \mathbb{P}(X_{g}'\neq Y_{g}')\\&\leq 2^{M}\mathbb{E}\left( \sum\limits_{g\in B_{S_{1}}(N)}d(\sigma_{1, g} X, Y_{g})\right) \\&< 2^{M}\epsilon |B_{S_{1}}(N)|\leq \delta |B_{S_{1}}(N)|.
	\end{align*}
Applying Lemma \ref{key} to $X_{g}'$ and $Y_{g}'$ with $B=A^{B_{S_{2}}(M)}$:
\begin{align*}
&I\left( (X_{g}')_{g\in B_{S_{1}}(N)}; (Y_{g}')_{g\in B_{S_{1}}(N)}\right)\\&> H((X_{g}')_{g\in B_{S_{1}}(N)})-|B_{S_{1}}(N)|H(\delta)-\delta |B_{S_{1}}(N)||B_{S_{2}}(M)|\log |A|.
\end{align*}	
 According to  the data-processing inequality (Lemma  \ref{dp}),
 \begin{align*}
 I(X; Y)\geq I((X_{g}')_{g\in B_{S_{1}}(N)}; (Y_{g}')_{g\in B_{S_{1}}(N)}).
 \end{align*}
	Then 
	\begin{align*}
	\dfrac{I(X; Y)}{|B_{S_{1}}(N)|}\geq \dfrac{H\left\lbrace (X_{g'})_{g' \in B_{S_{1}}(N)\times B_{S_{2}}(M)} \right\rbrace }{|B_{S_{1}}(N)|}-H(\delta)-\delta |B_{S_{2}}(M)|\log |A|.
	\end{align*}
This holds for any $N>0$. So
\begin{align*}
R_{\mu}(\left\lbrace B_{S_{1}}(N) \right\rbrace,  \epsilon)&\geq  \inf_{N} \dfrac{H\left\lbrace (X_{g'})_{g' \in B_{S_{1}}(N)\times B_{S_{2}}(M)} \right\rbrace }{|B_{S_{1}}(N)|}- H(\delta)-\delta |B_{S_{2}}(M)|\log |A|\\&= \lim\limits_{N\rightarrow \infty}\dfrac{H\left\lbrace (X_{g'})_{g' \in B_{S_{1}}(N)\times B_{S_{2}}(M)} \right\rbrace }{|B_{S_{1}}(N)|}- H(\delta)-\delta |B_{S_{2}}(M)|\log |A| .
\end{align*}	
We divide this by $\log (1/\epsilon)$ and take the limit $\epsilon\rightarrow 0$. 
Since $\log (1/\epsilon)< \log (1/ \delta)+ (M+1)$ (here $\delta$ has been fixed) and  $\lim\limits_{M\rightarrow \infty} \dfrac{|B_{S_{2}}(M)|}{M-1}=c$, we obtain
\begin{align*}
{\overline{\rm rdim}}(\mathcal{X}, \sigma_{{1}}, \left\lbrace B_{S_{1}}(N) \right\rbrace, d, \mu)\geq c \cdot h_{\mu}(\mathcal{X}, G)-c\delta  \log|A|.
\end{align*}
Here we have used 
\begin{align*}
h_{\mu}(\mathcal{X}, G)=\lim\limits_{N, M\rightarrow \infty} \dfrac{H\left\lbrace \left( X_{g'}\right)_{g'\in B_{S_{1}}(N)\times B_{S_{2}}(M)} \right\rbrace }{|B_{S_{1}}(N)||B_{S_{2}}(M)|}
\end{align*}
Letting $\delta\rightarrow 0$, we have $\underline{\rm rdim}(\mathcal{X}, \sigma_{1}, \left\lbrace B_{S_{1}}(N) \right\rbrace, d, \mu) \geq  c h_{\mu}(\mathcal{X}, G)$.
\end{proof}
\section{Examples}\label{s4}
In this section, we consider the following examples to illustrate our main theorem for the case of $c_{1}=c_{2}=c$. (see \cite{CC} for more details )
\begin{example}
 Let $G_{1}=\mathbb{Z}^{d}$, $S_{1}=\left\lbrace (1, 0, \cdots, 0),\cdots, (0,  \cdots, 1), (0, 0, \cdots, -1) \right\rbrace $.	Let $G_{2}=\mathbb{Z}\times (\mathbb{Z}/2\mathbb{Z})$ and $S_{2}=\left\lbrace (1, \overline{0}), (-1, \overline{0}), (0, \overline{1})\right\rbrace $. Then the ball radius $r$ centered at the element $(n, \overline{0})$ is represented in Fig\ref{fig0}. We deduce that $\gamma_{S_{2}}(n)=(2n+1)+2(n-1)+1=4n$. Take $G=G_{1}\times G_{2}$, by Theorem \ref{main}, we can have $${\rm mdim}_{H}(\mathcal{X}, \left\lbrace B_{S_{1}}(n) \right\rbrace , d)={\rm mdim}_{M}(\mathcal{X}, {G_{1}}, d)= 4  h_{top}(\mathcal{X}, {G}).$$ 
\end{example}

\begin{figure}[H]
	\centering
	\includegraphics[width=9.5cm, height=2.2cm]{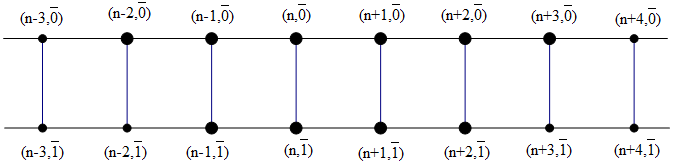}
	%\vspace{-1ex}
	\caption{}\label{fig0}
\end{figure}

Recall infinite dihedral group, that is,  the group  of  isometries of the real line $\mathbb{R}$ generated by reflections $r: \mathbb{R} \rightarrow \mathbb{R}$ and $s: \mathbb{R} \rightarrow \mathbb{R}$ defined by 
\begin{align*}
&r(x)=-x ~~(\text{symmetry with respect to 0})\\&
s(x)=1-x~~(\text{symmetry with respect to 1/2})
\end{align*}
for all $x\in \mathbb{R}$. Note that $r^{2}=s^{2}=1_{G}$. %Take  $S=\left\lbrace r, s\right\rbrace $ and $G=<r, s>$.
\begin{example}
Let $G_{1}=\mathbb{Z}^{d}$, $S_{1}=\left\lbrace (1, 0, \cdots, 0),\cdots, (0,  \cdots, 1), (0, 0, \cdots, -1) \right\rbrace $.	Let $G_{2}$ be the infinite dihedral group and $S_{2}=\left\lbrace r, s \right\rbrace $. Then the ball of radius $n$ centered at the element $g\in G_{2}$ is represented in Fig \ref{fig1}.  It follows that $\gamma_{S_{2}}(n)=2n+1$.  Take  $G=G_{1}\times G_{2}$. By Theorem \ref{main}, thus 
	${\rm mdim}_{H}(\mathcal{X}, \left\lbrace B_{S_{1}}(n) \right\rbrace , d)={\rm mdim}_{M}(\mathcal{X}, {G_{1}}, d)= 2\cdot h_{top}(\mathcal{X}, {G}).$
\end{example}

\begin{figure}[H]
	\centering
	\includegraphics[width=13.6cm, height=1.0cm]{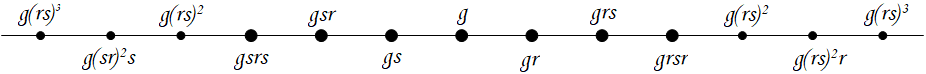}
	%\vspace{-1ex}
	\caption{}\label{fig1}
\end{figure}

  Let $G$ be a group. The \emph{lower central series of } $G$ is the sequence $(C^{i}(G))_{i\geq 0}$ of subgroup of $G$ defined by $C^{0}(G)=G$ and $C^{i+1}=[C^{i}(G), G]$ for all $i\geq 0$.   Here $[h, k]:=hkh^{-1}k^{-1}$ for $h, k\in  G$. An easy induction shows that $C^{i}(G)$ is normal in $G$ and that $G^{i+1}(G)\subset C^{i}(G)$ for all $i$. The group $G$ is said to be \emph{nilpotent} if there is an integer $i\geq 0$ such that $C^{i}(G)=\left\lbrace 1_{G} \right\rbrace $. The group $G$ is said to be nilpotent if there is an integer $i\geq 0$ such that $C^{i}(G)=\left\lbrace 1_{G}\right\rbrace $. The smallest integer $i\geq 0$ such that  $C^{i}(G)=\left\lbrace 1_{G} \right\rbrace $ is then called the nilpotency  degree of $G$. Every  nilpotent group is amenable. 
  
  1972, H. Bass \cite{Bas} showed that the growth of a nilpotent group $G$ with finite symmetric generating subset $S$ is exactly polynomial in the sense that there are positive constants $C_{1}$ and $C_{2}$ such that $C_{1}n^{d}\leq \gamma_{S}(n) \leq C_{2}n^{d}$, for all $n\geq 1$, where $d=d(G)$ is an integer which can be computed explicitly from the lower central series of $G$.
  \begin{example}
Let $G_{1}$ be a nilpotent group with finite symmetric generating subset $S_{1}$ and $deg(G_{1})=d$. Let $G_{2}$ be the infinite dihedral group   and $S_{2}=\left\lbrace  r, s\right\rbrace $. Set $G=G_{1}\times G_{2}$. Then 
$${\rm mdim}_{H}(\mathcal{X}, \left\lbrace B_{S_{1}}(n) \right\rbrace , d)={\rm mdim}_{M}(\mathcal{X}, {G_{1}}, d)= 2\cdot h_{top}(\mathcal{X}, {G}).$$	
\end{example}
Similarly, the above examples hold for Theorem \ref{11}.

{\bf Acknowledgements.}
The first author was supported by
 the Postgraduate Research Innovation Program of Jiangsu Province (KYCX201162). The first and second author were supported by NNSF of China (11671208 and 11431012). The third author was supported by NNSF of China (11971236,11601235), NSF of Jiangsu Province (BK20161014), NSF of the Jiangsu Higher Education Institutions of China (16KJD110003), China Postdoctoral Science Foundation (2016M591873), and China Postdoctoral Science Special Foundation (2017T100384). The work was also funded by the Priority Academic Program Development of Jiangsu Higher Education Institutions. We would like to express our gratitude to Tianyuan Mathematical Center in Southwest China, Sichuan University and Southwest Jiaotong University for their support and hospitality.

\end{document}